\let\oldtheoremstyle\theoremstyle 
\let\theoremstyle\relax  
\let\theoremstyle\oldtheoremstyle
\newcommand{\ub}{\mathbf{u}}
\newcommand{\wb}{\mathbf{w}}
\newcommand{\xb}{\mathbf{x}}
\newcommand{\Ab}{\mathbf{A}}
\newcommand{\Bb}{\mathbf{B}}
\newcommand{\Cb}{\mathbf{C}}
\newcommand{\Db}{\mathbf{D}}
\newcommand{\Gb}{\mathbf{G}}
\newcommand{\Hb}{\mathbf{H}}
\newcommand{\Ib}{\mathbf{I}}
\newcommand{\Kb}{\mathbf{K}}
\newcommand{\Lb}{\mathbf{L}}
\newcommand{\Nb}{\mathbf{N}}
\newcommand{\Pb}{\mathbf{P}}
\newcommand{\Qb}{\mathbf{Q}}
\newcommand{\Rb}{\mathbf{R}}
\newcommand{\Wb}{\mathbf{W}}
\newcommand{\Xb}{\mathbf{X}}
\newcommand{\Zb}{\mathbf{Z}}
\newcommand{\Pc}{\mathcal{P}}
\newcommand{\Sc}{\mathcal{S}}
\newcommand{\norm}[1]{\left\lVert#1\right\rVert}
\newtheorem{theorem}{Theorem}
\newtheorem{assumption}{Assumption}
\newtheorem{definition}{Definition}
\newtheorem{lemma}[theorem]{Lemma}
\newtheorem{remark}{Remark}
\begin{document}

\begin{frontmatter}

\title{Regret Analysis of Policy Optimization over Submanifolds\\ for Linearly Constrained Online LQG\thanksref{footnoteinfo}} 

\thanks[footnoteinfo]{T.J. Chang and S. Shahrampour are with the Department of Mechanical and Industrial Engineering, Northeastern University, Boston, MA 02115, USA.}

\author[Boston]{Ting-Jui Chang}\ead{chang.tin@northeastern.edu} and     
\author[Boston]{Shahin Shahrampour}\ead{s.shahrampour@northeastern.edu}               

\address[Boston]{Department of Mechanical and Industrial Engineering, Northeastern University, Boston, USA}  

\begin{keyword}                           
Online Optimization; Linear Quadratic Regulator; Online Control; Riemannian Optimization.               
\end{keyword}                             

\begin{abstract}                          
Recent advancement in online optimization and control has provided novel tools to study online linear quadratic regulator (LQR) problems, where cost matrices are time-varying and unknown in advance. In this work, we study the online linear quadratic Gaussian (LQG) problem over the manifold of stabilizing controllers that are linearly constrained to impose physical conditions such as sparsity. By adopting a Riemannian perspective, we propose the online Newton on manifold (ONM) algorithm, which generates an online controller on-the-fly based on the second-order information of the cost function sequence. To quantify the algorithm performance, we use the notion of regret, defined as the sub-optimality of the algorithm cumulative cost against a (locally) minimizing controller sequence. We establish a regret bound in terms of the path-length of the benchmark minimizer sequence, and we further verify the effectiveness of ONM via simulations.
\end{abstract}

\end{frontmatter}

\section{Introduction}
LQR is one of the most well-studied optimal control problems in control theory \citep{4309169} with various application domains, such as econometrics, robotics, and physics. In LQR it is well-known that if the system dynamics and cost matrices are known, the optimal controller can be derived by solving Riccati equations. However, this assumption does not hold in {\it non-stationary} environments, where the cost parameters may change over time and are {\it unknown in advance}. In contrast to offline LQR, in which the objective is to compute an optimal controller based on time-invariant cost matrices, the typical goal in {\it online} LQR is to design a controller on-the-fly that can effectively adapt to the characteristics of the {\it time-varying} cost sequence. 

To study online control in general, a commonly used criterion for measuring performance is {\it regret}, defined as the accumulated sub-optimality (excessive cost) over time with respect to a benchmark policy. For online LQR problems, various approaches are proposed to reformulate the online control problem as an online optimization and use that framework to generate a {\it real-time} controller (e.g., semi-definite programming (SDP) relaxation for time-varying LQR \citep{cohen2018online} and noise feedback policy design for time-varying convex costs \citep{agarwal2019online}). 

Despite offering favorable theoretical guarantees in the form of sublinear regret with respect to the time horizon  $T$, existing work on online control \citep{cohen2018online,simchowitz2020naive,agarwal2019online,agarwal2019logarithmic,simchowitz2020improper,chang2021distributed} typically addresses {\it unconstrained} online controllers, parameterized as linear functions of system states or past noises. Unfortunately, these settings do not apply to {\it constrained} control problems where sparsity requirements are imposed on the controller matrix to reflect a physical condition or to capture the underlying interaction topology among various subsystems (e.g., coordination of unmanned aerial vehicles \citep{sarsilmaz2021distributed} and manipulation of robots in smart factories \citep{duan2024framework}).

In this work, we focus on the online LQG problem over the manifold of stabilizing controllers that are {\it linearly constrained}. The consideration of additional constraints makes the analysis more challenging as the domain of a constrained LQR is generally disconnected \citep{feng2019exponential}, preventing the {\it gradient dominance} property \citep{fazel2018global} that guarantees global convergence for unconstrained LQR. For example, in the context of unconstrained offline LQR, first-order methods have been shown to converge to the global optimal controller based on the gradient dominance property. But in the constrained setup, projected gradient descent techniques can only converge sublinearly to first-order {\it stationary} points \citep{bu2019lqr}. Note also that while some structures can be imposed on the controller through {\it regularization}, this class of methods only {\it promotes} the structural constraints rather than enforcing hard constraints. 

To address the constrained online LQG problem, our work takes a Riemannian perspective inspired by the recent work of  \citet{talebi2023policy}, where a second-order method was proposed based on the Riemannian metric arising from the optimal control problem itself. They showed that the Newton method based on this problem-related Riemannian metric can effectively capture the geometry of the cost in offline LQR, allowing the iterates to converge linearly (and eventually quadratically) to a local minimum. This favorable convergence behavior is partially attributable to the fact that the Riemannian hessian, defined with respect to the Riemannian metric, remains positive-definite on a larger domain compared to the Euclidean hessian.

In this work, we extend this idea to the online setup to develop a real-time controller which satisfies some linear structural constraints. The contributions of this work are as follows:
\begin{itemize}
    \item Inspired by \cite{talebi2023policy}, for linearly constrained online LQG problems, we propose the online Newton on manifold (ONM) algorithm, which is an online Riemannian metric-based second-order approach that leverages the inherent problem geometry while taking into account linear constraints imposed on the controller.
    \item Instead of comparing to a fixed control policy in hindsight, which is typical in online control problems, we consider a dynamic benchmark for regret. In particular, we define regret with respect to a sequence of (locally) minimizing linear policies. We then establish a dynamic regret bound based on the path-length of this benchmark minimizer sequence (Theorem \ref{T: Main theorem}). 
    \item We present several simulations to showcase the performance of our proposed algorithm: 1) For the constrained case, we illustrate that ONM is superior to its Euclidean-metric counterpart, as well as the projected gradient method. 2) For the unconstrained case, we compare ONM with two existing online control algorithms for LQR \citep{cohen2018online,agarwal2019online}. 3) We also validate our theory by demonstrating that increased fluctuation in the cost sequence results in greater regret.
\end{itemize}

\section{Related Literature}
\noindent
{\bf I) Linear Quadratic Regulator:}

{\bf I-A) Known Systems:} Based on the availability of the system states, the policy optimization can be classified as state-feedback LQR (SLQR) or output-feedback LQR (OLQR). For OLQR, the linear policy in terms of output was first studied in \citep{levine1970determination}, where the authors derived the necessary condition for the optimal controller and provided an iterative policy learning method that solves a series of nonlinear matrix equations at each iteration. Following this work, continued effort was made to address this problem through the lens of first- and second-order methods, and convergence guarantees were also provided with the help of backtracking techniques \citep{moerder1985convergence, makila1987computational, toivonen1987newton, rautert1997computational}. For SLQR, the convergence property of gradient-based methods has been studied for both discrete-time and continuous-time cases \citep{fazel2018global,bu2019lqr,bu2020policy,mohammadi2021convergence}, where the LQR costs were shown to satisfy the gradient dominance property, which ensures linear convergence rates despite nonconvexity. 

To model the network inter-connection in distributed control problems, structural constraints are sometimes encoded as linear constraints imposed on the control gain; however, for general constrained LQR problems, the gradient dominance property does not necessarily hold, and gradient-based methods such as Projected Gradient Descent (PGD)  converge to first-order stationary points with a sublinear rate \citep{bu2019lqr}. Recently, \citet{talebi2023policy} proposed a second-order update method for linearly constrained LQR, where the Hessian operator is defined based on the Riemannian metric arising from the optimization objective, and they proved that their method converges locally linearly (and eventually quadratically).

{\bf I-B) Unknown Systems: }
For linear systems with unknown system parameters, various data-driven approaches were proposed for the learning part of LQR. Depending on whether or not the system identification procedure is incorporated into the learning process, these methods can be classified as {\it indirect} or {\it direct} (by bypassing the identification step). For indirect methods, \citet{dean2020sample} proposed an algorithm for which the cost sub-optimality gap grows linearly with the parameter estimation error. This dependence was later improved by \citet{mania2019certainty}, who presented a sub-optimality gap scaling quadratically with the estimation error.
For direct approaches, \citet{de2019formulas} proposed a new parameterization, which instead of using system matrices, formulates the problem in terms of the observation of state and input sequences. For the noisy setup, \citet{de2021low} demonstrated sufficient conditions under which a small relative error is guaranteed with respect to the unknown optimal controller, and in \citep{dorfler2023certainty}, a regularized method was proposed to promote certainty-equivalence. The data-based formulation for learning of Kalman gain was proposed in \citet{liu2024learning}. \citet{zhao2025data} proposed another parameterization, where the dimension of the policy depends only on the system dimension, and showed that the corresponding policy optimization enjoys the property of projected gradient dominance.

{\bf II) Online Optimization:} 

There exists a rich body of literature on the field of online optimization. The general goal of this problem is to make online decisions in the presence of a time-varying sequence of functions that may change adversarially. As mentioned earlier, the performance of an online algorithm is captured by the notion of {\it regret}, which is considered to be static (dynamic) if the benchmark decision is fixed (time-varying). For the static case, it is well-known that the optimal regret bounds are $O(\sqrt{T})$ and $O\left(\log(T)\right)$ for convex and strongly convex functions, respectively \citep{zinkevich2003online,hazan2007logarithmic}. For the dynamic case, as the function sequence may vary in an arbitrary manner, typically there are no explicitly sublinear regret bounds. Instead, the dynamic regret bound is presented in terms of different regularity measures of the benchmark sequence: path-length \citep{zinkevich2003online,zhang2018adaptive,mokhtari2016online}, function value variation \citep{besbes2015non}, and variation in gradients or Hessians \citep{chiang2012online,rakhlin2013optimization,jadbabaie2015online,chang2021online}.

{\bf III) Online Control:} 

Recent advancements in online optimization and control have fueled interest in studying linear dynamical systems with time-varying costs. For linear time-invariant (LTI) systems with known dynamics, \citet{cohen2018online} reformulated the online LQG problem with SDP relaxation and established a regret bound of $O(\sqrt{T})$. The setup was later extended to the case with general convex functions and adversarial noises in \citet{agarwal2019online}, where the disturbance-action controller (DAC) parameterization was proposed, and a regret bound of $O(\sqrt{T})$ was derived. The regret bound was later improved to $O\left(\text{poly}(\log(T))\right)$ in \cite{agarwal2019logarithmic} when strongly convex functions were considered. In addition to the case of known linear dynamics, the setup of {\it unknown} dynamics was also studied for convex costs \citep{hazan2020nonstochastic} and strongly convex costs \citep{simchowitz2020improper}. \citet{zhao2022non} studied the setup with general convex costs for LTI systems and derived a dynamic regret bound with respect to a time-varying DAC. This bound was later improved for the case of quadratic costs \citep{baby2022optimal}. For linear time-varying systems, the corresponding dynamic regret bound was derived by \citet{luo2022dynamic}. Finally, another setup, where constraints are imposed on states and control actions, was  studied  by \citep{li2021online,li2021information} to model the safety concerns. 
\section{Problem Formulation}
In this section, we provide the problem formulation as well as background information on tools we use to design and analyze ONM. 
\begin{itemize}
    \item In Section \ref{subsec: problem formulation}, we formulate the linearly constrained online LQG control problem and define the performance criterion to assess our proposed algorithm (ONM).
    \item In Section \ref{subsec: idea of strong stability}, to characterize the performance of the online linear controllers, including ONM, we present the idea of (sequential) strong stability based on \cite{cohen2018online}.
    \item To better leverage the intrinsic geometry of LQR, the problem is transformed into an online Riemannian optimization. In Section \ref{subsec: Riemannian background}, we first introduce a Riemannian metric arising from LQR. Then, to define gradient and Hessian on the Riemannian submanifold, we discuss the idea of Riemannian connection \citep{talebi2023policy}. 
\end{itemize}

\subsection{Notation} 
{\small
\begin{tabular}{|c||m{20em}|}
    \hline
    $\rho(\Ab)$ & The spectral radius of matrix $\Ab$ \\
    \hline
    $\norm{\cdot}$ & Euclidean (spectral) norm of a vector (matrix)\\
    \hline
    $\norm{\cdot}_g$ & Norm induced by the Riemannian metric $g$\\
    \hline
    $\text{dist}(\cdot,\cdot)$ & Riemannian distance based on metric $g$\\
    \hline
    $\mathbb{E}[\cdot]$ & The expectation operator\\
    \hline
    $\mathbb{L}(\Ab,\Zb)$ & Solution of the Lyapunov equation: $\Xb = \Ab\Xb\Ab^{\top} + \Zb$\\
    \hline
    $\Ib_d$ & Identity matrix with dimension $d\times d$\\
    \hline
    $\underline{\lambda}(\Ab)$ & The minimum eigenvalue of $\Ab$\\
    \hline
    $\overline{\lambda}(\Ab)$ & The maximum eigenvalue of $\Ab$\\
    \hline
     $T_{\Kb}\Sc$ & The tangent space at point $\Kb$ on manifold $\Sc$\\
    \hline    
\end{tabular}}

Throughout the paper, when the inner product is used, the corresponding metric is clear from the context.

\subsection{Linearly Constrained Online LQG Control}\label{subsec: problem formulation}
We consider an LTI system with the following dynamics
\begin{equation*}
    \xb_{t+1} = \Ab\xb_t + \Bb\ub_t + \wb_t,
\end{equation*}
where the system matrices $\Ab \in \mathrm{R}^{n \times n}$ and $\Bb \in \mathrm{R}^{n \times m}$ are known and $\wb_t$ is a Gaussian noise with zero mean and covariance $\Wb \succeq \sigma^2 \Ib$. The noise sequence $\{\wb_t\}$ is assumed to be independent and identically distributed over time. The general goal of an online LQG problem is that given a sequence of cost matrices $\{(\Qb_t,\Rb_t)\}$, which is {\it unknown} in advance and is revealed sequentially to the algorithm, decide the control signal in {\it real time} while ensuring an acceptable cumulative quadratic cost. In other words, in round $t$, an online algorithm receives the state $\xb_{t}$ and applies the control $\ub_{t}$. Then, the positive-definite cost matrices $\Qb_{t}$ and $\Rb_{t}$ are revealed, and the cost $\xb_{t}^\top\Qb_{t}\xb_{t} + \ub_{t}^\top\Rb_{t}\ub_{t}$ is incurred. Throughout this paper, we assume that there exists some $C>0$, such that $\text{Tr}(\Qb_t), \text{Tr}(
\Rb_t)\leq C$. Note that if the sequence of cost matrices is known in advance, the optimal controller can be easily derived by solving the Riccati equation.

{\bf Linearly Constrained Controllers:} 
Given a stabilizable linear dynamical system $(\Ab,\Bb)$, we define the set of stable linear controllers as follows
\begin{equation*}
    \Sc :=\{\Kb\in \mathrm{R}^{m\times n}\;|\;\rho(\Ab+\Bb\Kb)<1\}. 
\end{equation*}
In the existing literature on optimal control of LTI systems, the optimal controller is generally achieved by searching $\Sc$, which mainly leads to a dense solution that may violate some practical conditions, e.g., the sparsity requirement or safety restrictions imposed by the physical constraints. Following \cite{talebi2023policy}, in this work we take into account such constraints by considering some linear constraints on $\Kb$, e.g., $\Cb\Kb = \Db$, and we seek to learn a sequence of linear controllers $\{\Kb_t\}$ $(\ub_t = \Kb_t\xb_t)$, such that $\Kb_t\in \Tilde{\Sc}:=\Sc\cap \mathcal{K}$, where $\mathcal{K}:=\{\Kb\in \mathrm{R}^{m\times n}\;|\;\Cb\Kb=\Db\}$.\\\\
{\bf Regret Definition:} For any online LQG control algorithm $\mathcal{A}$, the corresponding cumulative cost after $T$ steps is expressed as 
\begin{equation}\label{Eq: Cumulative cost of an online control alg}
    J_T(\mathcal{A})=\mathrm{E}\left[\sum_{t=1}^T {\xb_t^{\mathcal{A}}}^\top\Qb_t\xb_t^{\mathcal{A}} + {\ub_t^{\mathcal{A}}}^\top\Rb_t\ub_t^{\mathcal{A}}\right].
\end{equation}
Since the costs are {\it unknown} in advance and the controllers are determined in real time, it is not possible to directly minimize the cumulative cost and quantify the exact sub-optimality. To gauge the performance of $\mathcal{A}$, we use the notion of {\it regret}, defined as the difference between the cumulative cost and the cost associated with a comparator policy $\pi$ as
\begin{equation}\label{Eq: Regret}
    \text{Regret}_T(\mathcal{A}):= J_T(\mathcal{A}) - J_T(\pi).
\end{equation}
In this work, the comparator policy $\pi$ is defined as the sequence of linear controllers $\{\Kb^*_t\}$, such that $\forall t$, $\Kb^*_t$ is a local minimizer (over $\Tilde{\Sc}$) of the   {\it time-invariant infinite-horizon} LQG problem with $(\Qb_t,\Rb_t)$ as the corresponding cost matrices. Note that this comparator policy is greedy in the sense that if the cost matrices of the original time-varying problem stay constant after step $t_0$, then the policy $\ub_{t} = \Kb^*_{t_0}\xb_{t}$ enjoys a (locally) optimal performance when $T$ goes to infinity. Regret is a standard metric in online decision making, which has also been used as an indicator of the system {\it stability} in control \citep{karapetyan2023implications}.

\subsection{Strong Stability and Sequential Strong Stability}\label{subsec: idea of strong stability}
In order to capture the performance of online optimal control algorithms, following \cite{cohen2018online}, we introduce the notion of strong stability.
\begin{definition}\label{D: Strong Stability}(Strong Stability) A linear policy $\Kb$ is $(\kappa, \gamma)$-strongly stable (for $\kappa > 0$ and $0<\gamma\leq 1$) for the LTI system $(\Ab,\Bb)$, if $\norm{\Kb}\leq \kappa$, and there exist matrices $\Lb$ and $\Hb$ such that $\Ab+\Bb\Kb=\Hb\Lb\Hb^{-1}$, with $\norm{\Lb}\leq 1-\gamma$ and $\norm{\Hb}\|\Hb^{-1}\|\leq \kappa$.
\end{definition}

The idea of strong stability simply provides a quantitative perspective of stability. In fact, any stable controller can be shown to be strongly stable for some $\kappa$ and $\gamma$ \citep{cohen2018online}. The notion of strong stability helps with quantifying the rate of convergence to the steady-state distribution. In addition to strong stability, as the applied controller $\Kb_t$ changes over time in the online setup, we also use {\it sequential} strong stability, defined in \citet{cohen2018online} as follows.
\begin{definition}\label{D: Sequential Strong Stability}(Sequential Strong Stability) A sequence of linear  policies $\{\Kb_t\}_{t=1}^T$ is $(\kappa,\gamma)$-strongly stable if there exist matrices $\{\Hb_t\}_{t=1}^T$ and $\{\Lb_t\}_{t=1}^T$ such that $\Ab+\Bb\Kb_t=\Hb_t\Lb_t\Hb_t^{-1}$ for all $t$ with the following properties,
\begin{enumerate}
    \item $\norm{\Lb_t}\leq 1-\gamma$ and $\norm{\Kb_t}\leq \kappa$.
    \item  $\norm{\Hb_t}\leq \beta^{\prime}$ and $\norm{\Hb_t^{-1}}\leq 1/\alpha^{\prime}$ with $\kappa=\beta^{\prime}/\alpha^{\prime}$.
    \item $\norm{\Hb_{t+1}^{-1}\Hb_t}\leq 1+\gamma/2$.
\end{enumerate}
\end{definition}
With the idea of sequential strong stability, for the time-varying control policy $\ub_t = \Kb_t\xb_t$, we can quantify the difference between the sequence of state covariance matrices induced by the policy and the sequence of steady-state covariance matrices. 

\subsection{Policy Optimization over Manifolds for Time-Invariant LQ Control}\label{subsec: Riemannian background}

In \citep{talebi2023policy}, it was shown that if a time-invariant LQG (or LQR) problem is formulated as an optimization over a manifold, equipped with the Riemannian metric arising from the original problem, then the update direction computed using the second-order information defined on this problem-oriented Riemannian metric can better capture the inherent geometry, which in turn provides a better convergence to a local minimum. In this section, we highlight the key idea of the Riemannian approach proposed in \citep{talebi2023policy}.

{\bf Riemannian Metric:} Let us consider $\Sc$ (the set of stable linear policies) as a manifold on its own. Consider the cost function of a time-invariant infinite-horizon LQG control problem with a fixed cost pair $(\Qb,\Rb)$, which for a linear policy $\Kb$ can be expressed as $f(\Kb) = \text{Tr}(\Pb_{\Kb}\Wb)$, where  
\begin{equation}\label{Eq: Cost reformulation}
\begin{split}
    \Pb_{\Kb}&:=\mathbb{L}\big((\Ab+\Bb\Kb)^{\top},\Qb + \Kb^{\top}\Rb\Kb\big).
\end{split}
\end{equation}
Then, based on the Lyapunov-trace property, the cost can be reformulated as 
\begin{equation}\label{Eq: Cost reformulation2}
  f(\Kb)=\text{Tr}\left((\Qb + \Kb^{\top}\Rb\Kb)\mathbb{L}(\Ab+\Bb\Kb, \Wb)\right).  
\end{equation}
Inspired by this expression, \cite{talebi2023policy} proposed a  covariant 2-tensor field, which was shown to be a Riemannian metric that better captures the geometry of LQG problems (see Proposition 3.3 in \cite{talebi2023policy}). This Riemannian metric, which we denote by $g$, provides a larger region in which the Hessian is positive-definite, and it can be useful for the convergence of second-order algorithms. Throughout this paper, for any two linear controllers $\Kb_1, \Kb_2 \in \Sc$, we denote their Riemannian distance (with respect to $g$) as $\text{dist}(\Kb_1,\Kb_2)$.

{\bf Riemannian Gradient and Hessian:} We denote by $\text{grad} f$ the gradient of $f$ with respect to the metric $g$. Then, the Hessian operator of any smooth function $f\in C^{\infty}(\Sc)$, is defined as
\begin{equation*}
    \text{Hess }f[U] := \nabla_U \text{grad } f, 
\end{equation*}
where $\nabla_U$ denotes the covariant derivative operator along the vector field $U$. As mentioned earlier, in practice we are more interested in controllers $\Kb$ that are in a relatively simple subset $\mathcal{K}\subset R^{n\times m}$ such that $\Tilde{\Sc}:=\mathcal{K}\cap \Sc$ is an embedded submanifold of $\Sc$, so we focus on the restriction of $f$ to $\Tilde{\Sc}$ denoted by $h$, i.e., $h:=f|_{\Tilde{\Sc}}$.  \cite{talebi2023policy} showed that based on the Riemannian tangential and normal projections, we can calculate the Riemannian gradient $\text{grad } h$ as well as the Hessian operator $\text{Hess }h[U]$ once we know the same for $f$  (see Proposition 3.5 in \citep{talebi2023policy}).

{\bf Optimization over Riemannian Manifolds: }If we want to directly solve the linearly constrained LQG using existing techniques developed for optimization over manifolds, it is necessary to use a retraction operation. However, such a retraction is generally not available due to the complex geometry of $\Sc$. Although it is possible to derive the Riemannian exponential map and use it as a retraction, the computation involves solving a system of second-order ordinary differential equations, i.e., geodesic equations based on the Christoffel symbols of the Riemannian metric $g$, which is computationally undesirable (see Proposition 3.4 in \citep{talebi2023policy} for explicit expressions). To address this issue in their proposed algorithm, \citet{talebi2023policy} control the update direction and enforce the feasibility of the iterate updated along this direction by choosing a proper step-size with a stability certificate, defined as follows.
\begin{lemma}\label{L: Stability certificate}(Lemma 4.1 in \cite{talebi2023policy})
    Consider a smooth mapping $\mathcal{Q} : \Sc \to \mathrm{R}^{n\times n}$ that sends $\Kb$ to any $\mathcal{Q}_{\Kb} \succ 0$. For any direction $\Gb \in T_{\Kb}\Sc$ at any point $\Kb \in \Sc$, if
    \begin{equation*}
        0\leq \eta\leq s_{\Kb}:=\frac{\underline{\lambda}(\mathcal{Q}_{\Kb})}{2\overline{\lambda}\big(\mathbb{L}((\Ab+\Bb\Kb)^{\top},\mathcal{Q}_{\Kb})\big)\norm{\Bb\Gb}_2}, 
    \end{equation*}
    then $\Kb + \eta\Gb \in \Sc$. $s_{\Kb}$ is referred to as the stability certificate at $\Kb$.
\end{lemma}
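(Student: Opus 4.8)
\emph{Proof idea.} The plan is to use a single matrix $\Pb$ as a common Lyapunov certificate along the entire segment $\{\Kb+t\Gb:t\in[0,\eta]\}$ and then invoke the discrete-time Lyapunov theorem, which states that for $\Pb\succ0$ the condition $\Pb-\Mb^\top\Pb\Mb\succ0$ forces $\rho(\Mb)<1$. Take $\Pb:=\mathbb{L}\big((\Ab+\Bb\Kb)^\top,\mathcal{Q}_{\Kb}\big)$. Because $\Kb\in\Sc$ (so $\Ab+\Bb\Kb$ is Schur stable) and $\mathcal{Q}_{\Kb}\succ0$, this Lyapunov solution exists, is positive definite, and satisfies $\Pb-(\Ab+\Bb\Kb)^\top\Pb(\Ab+\Bb\Kb)=\mathcal{Q}_{\Kb}$. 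Setting $\Ab(t):=(\Ab+\Bb\Kb)+t\Bb\Gb$ and $W(t):=\Pb-\Ab(t)^\top\Pb\Ab(t)$, it suffices to prove $W(t)\succ0$ for all $t\in[0,\eta]$, since then $\rho(\Ab(t))<1$, i.e.\ $\Kb+t\Gb\in\Sc$. At the left endpoint $W(0)=\mathcal{Q}_{\Kb}\succeq\underline{\lambda}(\mathcal{Q}_{\Kb})\Ib\succ0$.

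First I would establish a uniform Lipschitz bound on $W$ in the region where it is already positive semidefinite. Differentiating gives $W'(t)=-\big((\Bb\Gb)^\top\Pb\Ab(t)+\Ab(t)^\top\Pb\Bb\Gb\big)$, hence $\norm{W'(t)}\le 2\,\norm{\Pb^{1/2}\Bb\Gb}\,\norm{\Pb^{1/2}\Ab(t)}$. The factor $\norm{\Pb^{1/2}\Bb\Gb}\le\sqrt{\overline{\lambda}(\Pb)}\,\norm{\Bb\Gb}$ is immediate; the key observation is that wherever $W(t)\succeq0$ one has $\Ab(t)^\top\Pb\Ab(t)\preceq\Pb$, so $\norm{\Pb^{1/2}\Ab(t)}^2=\overline{\lambda}\big(\Ab(t)^\top\Pb\Ab(t)\big)\le\overline{\lambda}(\Pb)$. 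Together these give, on $\{W\succeq0\}$,
\[
\norm{W'(t)}\le 2\,\overline{\lambda}(\Pb)\,\norm{\Bb\Gb}=:M,\qquad s_{\Kb}=\underline{\lambda}(\mathcal{Q}_{\Kb})/M .
\]

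The main obstacle is that this derivative bound is self-referential: it holds only where $W\succeq0$, which is precisely what we are trying to prove. I would close the loop with a continuity (maximal-interval) argument. Let $T_0:=\sup\{\tau\in[0,\eta]:W(s)\succ0\text{ for all }s\in[0,\tau)\}$; since $W(0)\succ0$ and $W$ is continuous, $T_0>0$ and $W\succeq0$ on $[0,T_0]$. The Lipschitz bound then yields $W(t)=W(0)+\int_0^tW'(s)\,ds\succeq\big(\underline{\lambda}(\mathcal{Q}_{\Kb})-Mt\big)\Ib$ for $t\in[0,T_0]$. If $T_0<\eta\le s_{\Kb}$, then $\underline{\lambda}(\mathcal{Q}_{\Kb})-MT_0>0$, so $W(T_0)\succ0$, and continuity of $W$ would extend positivity past $T_0$, contradicting maximality. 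Hence $T_0=\eta$ and $W(t)\succ0$ on $[0,\eta)$, giving $\Kb+t\Gb\in\Sc$ there. (One could instead bound $W(\eta)$ in one shot, using the sharper estimate $\norm{\Pb^{1/2}(\Ab+\Bb\Kb)}\le\sqrt{\overline{\lambda}(\Pb)-\underline{\lambda}(\mathcal{Q}_{\Kb})}$ together with $\sqrt{1-x}\le 1-x/2$ to absorb the quadratic-in-$\eta$ term; the continuity route is cleaner and is the one I would write up.)

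Finally I would upgrade to the closed endpoint $t=\eta$. For $\eta<s_{\Kb}$ the displayed bound already gives $W(\eta)\succ0$. For $\eta=s_{\Kb}$ the bound only yields $W(s_{\Kb})\succeq0$; strictness is recovered by noting that $W(s)\succ0$ on $[0,s_{\Kb})$ makes $\norm{\Pb^{1/2}\Ab(s)}<\sqrt{\overline{\lambda}(\Pb)}$ strict, so $\int_0^{s_{\Kb}}\norm{W'(s)}\,ds<Ms_{\Kb}=\underline{\lambda}(\mathcal{Q}_{\Kb})$ and thus $W(s_{\Kb})\succ0$ (equivalently, apply the result for every $\eta'<s_{\Kb}$ and pass to the limit). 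The degenerate case $\Bb\Gb=\mathbf 0$ is immediate, since then $s_{\Kb}=+\infty$ and $\Ab(t)\equiv\Ab+\Bb\Kb$ stays Schur stable.
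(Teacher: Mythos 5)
You should know at the outset that this paper never proves Lemma~\ref{L: Stability certificate}: it is imported, with attribution, as Lemma~4.1 of \cite{talebi2023policy}, and is used downstream purely as a black box (to justify the step-size rule $\eta_t=\min\{1,s_{\Kb_t}\}$ in Algorithm~\ref{alg:ONM} and the lower bound \eqref{Local convergence Eq14} inside Lemma~\ref{L: Local convergence}). So your argument can only be judged on its own merits, and on those merits it is correct and self-contained. The structure is sound: you freeze the Lyapunov certificate $\Pb=\mathbb{L}\big((\Ab+\Bb\Kb)^\top,\mathcal{Q}_{\Kb}\big)$ at the base point, note that the deficit $W(t)=\Pb-\Ab(t)^\top\Pb\Ab(t)$ equals $\mathcal{Q}_{\Kb}\succ 0$ at $t=0$, and—this is the crux—recognize that the derivative bound $\norm{W'(t)}\leq 2\overline{\lambda}(\Pb)\norm{\Bb\Gb}$ is only available where $W\succeq 0$, then close that self-reference with the maximal-interval bootstrap. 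That bootstrap is executed correctly (including the observation that $W\succeq 0$ extends to the closed interval $[0,T_0]$ by continuity, so the integral estimate is legitimate there), the constant $M=2\overline{\lambda}(\Pb)\norm{\Bb\Gb}$ reproduces exactly the certificate $s_{\Kb}=\underline{\lambda}(\mathcal{Q}_{\Kb})/M$ in the statement, and your strictness upgrade at the closed endpoint $\eta=s_{\Kb}$—via the strict pointwise bound $\norm{\Pb^{1/2}\Ab(s)}<\sqrt{\overline{\lambda}(\Pb)}$ on $[0,s_{\Kb})$, which makes $\int_0^{s_{\Kb}}\norm{W'(s)}\,ds$ strictly less than $\underline{\lambda}(\mathcal{Q}_{\Kb})$—is exactly the care this endpoint needs, as is the separate treatment of $\Bb\Gb=\mathbf{0}$.

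One aside in your write-up is wrong and should be deleted: the parenthetical claim that the endpoint case can ``equivalently'' be handled by applying the result for every $\eta'<s_{\Kb}$ and passing to the limit. That does not work, precisely because $\Sc$ is open: a limit of Schur-stable matrices can have spectral radius exactly $1$, so the limit argument only yields $\rho\big(\Ab(s_{\Kb})\big)\leq 1$, not membership in $\Sc$. Since you present this only as an alternative and your primary endpoint argument is valid, the proof stands; but as written the parenthetical would invite the exact confusion (openness of the stability region) that the whole lemma exists to manage.
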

The stability certificate provides a condition number that depends on geometric information of the manifold at point $\Kb\in \Sc$. This certificate indeed depends on the mapping $\mathcal{Q}$, which can be chosen arbitrarily as long as it is positive-definite. One such choice is $\mathcal{Q}_{\Kb}=\Qb+\Kb^\top \Rb \Kb$.

\begin{algorithm}[tb]
\caption{Online Newton on Manifold}
\label{alg:ONM}
\begin{algorithmic}[1]
    \STATE {\bfseries Require:} system parameters $(\Ab,\Bb)$, linear constraint $\mathcal{K}$, smooth mapping $\mathcal{Q}$, time horizon $T$.
   
    \STATE {\bf Initialize:} $\Kb_1$ close enough to $\Kb^*_1$ in terms of the Riemannian distance.
    
    \FOR{$t=1,2,\ldots,T$}
        \STATE Apply the control $\ub_t = \Kb_t\xb_t$ and receive $(\Qb_t,\Rb_t)$.

        \STATE Find the Newton direction ${\Gb}_{t}$ on $\Tilde{\Sc}$ satisfying
        $$\text{Hess } {h_t}_{\Kb_{t}} [\Gb_{t}] = -\text{grad }{h_t}_{\Kb_{t}},$$
 where $h_t:=f_t|_{\Tilde{\Sc}}$ and $f_t$ is defined in \eqref{Eq: Cost reformulation2} with matrices $\Qb_t$ and $\Rb_t$.
        \STATE Compute the stability certificate $s_{\Kb_{t}}$, choose step-size $\eta_{t}=\min\{1,s_{\Kb_{t}}\}$ and perform the update
        $$\Kb_{t+1} = \Kb_{t} + \eta_{t}\Gb_{t}.$$
    \ENDFOR
\end{algorithmic}
\end{algorithm}

\section{Algorithm: Online Newton on Manifold}
In this section, we present an algorithm for the online LQG problem, where the applied linear controller $\Kb_t$ needs to satisfy some linear constraints, e.g., $\Cb\Kb_t = \Db$. Our approach is to formulate the linearly constrained online LQG problem as an online optimization with the function sequence $\{h_t = f_t|_{\Tilde{\Sc}}\}$, where $f_t(\Kb)$ denotes the time-averaged infinite-horizon LQG cost based on $(\Qb_t, \Rb_t)$, following Equation \eqref{Eq: Cost reformulation2}.  The proposed algorithm, which we call online Newton on manifold (ONM), is summarized in Algorithm \ref{alg:ONM}. The core idea of ONM is to leverage second-order information derived with respect to the problem-oriented Riemannian metric to better capture the non-Euclidean geometry.
Each ONM iteration consists of two parts: 1) After receiving the  function information $(\Qb_t,\Rb_t )$ for round $t$, the algorithm computes the update direction $\Gb_t$ based on the Hessian operator defined by the Riemannian metric. 2) To ensure the feasibility of the updated controller, the step-size $\eta_t$ is derived based on the stability certificate. Then, the current controller is updated by $\eta_t\Gb_t$ and applied in the next iteration. 

The computational cost of ONM greatly depends on the form of linear constraint set $\mathcal{K}$. For example, suppose that we consider the sparsity constraint, such that the controller has $|D|$ non-zero elements, where $0\leq |D| \leq nm$. Then, the computation cost of each iteration can be decomposed into the following parts: 1) As the Riemannian metric is location-varying, for each iterate $\Kb_t$, the metric tensor needs to be computed based on Proposition 3.3 in \citep{talebi2023policy}), and the corresponding cost is $O(n^3)$ (solving the Lyapunov equation). 2) The Newton direction on the submanifold is computed using the Hessian and gradient operators defined by the Riemannian tangential projection, and the resulting cost is $O(n^3|D| + |D|^3)$ (detailed expressions are provided in Section V of \citep{talebi2023policy}). 3) The calculation of the stability certificate at $\Kb_t$ takes $O(n^3)$ operations. Therefore, the total computation cost for each iteration is $O(n^3|D| + |D|^3)$, which is at most $O(n^4m+n^3m^3)$ when $|D|=nm$.

\section{Theoretical Results}
With the help of the problem-oriented Riemannian metric, we show that ONM can effectively adapt to the dynamic environment with a regret guarantee in terms of the path-length of the (locally) optimal controller sequence $\{\Kb^*_t\}$, and the regret is sublinear when the sequence is slowly varying. 

Let us start with stating our technical assumptions that  are quite standard for analyzing the local convergence of the second order methods.
\begin{assumption}\label{A: Locally optimal stationary}
    The local minimizer $\Kb^*_t$ is a nondegenerate local minimum of $h_t:=f_t|_{\Tilde{\Sc}}$ for all $t$.
\end{assumption}

\begin{assumption}\label{A: Hessian boundedness}
    For all $t$, there exists a compact neighborhood $\mathcal{U}_t \subset \Tilde{\Sc}$, where $\text{Hess }h_t$ is positive-definite. Also, there exist positive constants $\mu_g,L_g$ such that for all $\Kb\in \mathcal{U}_t$ and $\Gb\in T_{\Kb}\Tilde{\Sc}$,
    $$\mu_g\norm{\Gb}^2_{g_{\Kb}} \leq \langle \text{Hess }{h_t}_{\Kb} [\Gb],\Gb\rangle \leq L_g\norm{\Gb}^2_{g_{\Kb}}.$$ 
    We further assume that the Hessian is $L_H$-Lipschitz smooth. 
\end{assumption}

\begin{assumption}\label{A: Bounded trace of a stable controller}
    For all $\Kb\in \cup \{\mathcal{U}_t\}$, there exists a positive constant $\nu$ such that the corresponding steady-state covariance $\Xb$ satisfies $\text{Tr}(\Xb + \Kb\Xb\Kb^{\top}) \leq \nu$.
\end{assumption}
All these assumptions are mild in the sense that Assumptions \ref{A: Locally optimal stationary}-\ref{A: Hessian boundedness} are standard in the literature for the convergence analysis of Newton-type methods \citep{nesterov1998introductory, chang2021online, talebi2023policy}, and we just naturally use their Riemannian versions in our context. Moreover, Assumption \ref{A: Bounded trace of a stable controller} is only used for quantification purposes and can be easily justified based on the boundedness of $\cup \{\mathcal{U}_t\}$.

We now present our main theorem, which provides a regret guarantee for ONM.  

\begin{theorem}\label{T: Main theorem}
Suppose that Assumptions \ref{A: Locally optimal stationary} to \ref{A: Bounded trace of a stable controller} hold. Assume further that
\begin{enumerate}
    \item $\exists \Kb_1 \in \mathcal{U}_1$ such that $\text{dist}(\Kb_1,\Kb^*_1)\leq \vartheta$, where $\vartheta$ is defined as follows 
    \begingroup
    \small
    \begin{equation*}
        \vartheta:=\min \left\{\frac{\sigma^4}{L_s\nu(1+\alpha)},\frac{c}{(L_g/\mu_g)-\alpha/2},\frac{\alpha}{(L_HL_g^2/\mu_g^3)}\right\},
    \end{equation*}
    \endgroup
   where $\alpha \in (0,1/\sqrt{2})$, and $L_s, L_H, L_g \text{ and } c$ are problem-related constants (see Section \ref{sec:constants} for definitions).
    \item For all $t$, we have 
    \begin{equation*}
        \text{dist}(\Kb^*_{t+1}, \Kb^*_{t})\leq \min \left\{ \frac{\sigma^4}{L_s\nu}, (1-\alpha)\vartheta\right\}.
    \end{equation*}
\end{enumerate}    
Then, by choosing $\eta_t=\min\{1, s_{\Kb_{t}}\}$, where $s_{\Kb_{t}}$ is the stability certificate at $\Kb_t$, the regret of ONM is of the following order,
\begin{equation*}
    \text{Regret}_T(\text{ONM})=O\Big(\sum_{t=2}^{T}\text{dist}(\Kb^*_{t}, \Kb^*_{t-1})\Big).
\end{equation*}

\end{theorem}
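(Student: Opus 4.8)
The plan is to split the regret by inserting the sum of idealized steady-state costs $\sum_{t=1}^T f_t(\Kb_t)$ as an intermediate quantity. Writing
\begin{align*}
\text{Regret}_T &= \underbrace{\Big(J_T(\text{ONM}) - \textstyle\sum_t f_t(\Kb_t)\Big)}_{E_1} \\
&\quad + \underbrace{\textstyle\sum_t \big(f_t(\Kb_t) - f_t(\Kb^*_t)\big)}_{E_2} \\
&\quad + \underbrace{\Big(\textstyle\sum_t f_t(\Kb^*_t) - J_T(\pi)\Big)}_{E_3},
\end{align*}
the term $E_2$ is the accumulated sub-optimality of the on-the-fly controllers measured against the instantaneous local minimizers, whereas $E_1$ and $E_3$ measure the gap between the true closed-loop cost, which carries the transient induced by a time-varying controller, and the steady-state cost, for the ONM and the benchmark sequences respectively. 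I would bound $E_2$ using the local convergence theory of the Riemannian Newton method and $E_1, E_3$ using (sequential) strong stability.

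The heart of the argument is an inductive invariant on $d_t := \text{dist}(\Kb_t, \Kb^*_t)$, namely $d_t \leq \vartheta$ and $\Kb_t \in \mathcal{U}_t$. Given the invariant at time $t$, I would first bound the Newton direction by $\norm{\Gb_t}_g \leq (L_g/\mu_g)\,d_t$ (since $\text{grad}\,h_t$ vanishes at $\Kb^*_t$ by Assumption \ref{A: Locally optimal stationary}, and using Assumption \ref{A: Hessian boundedness}); together with the second component of $\vartheta$ this keeps $\Kb_{t+1}$ inside $\mathcal{U}_t$ and forces the stability certificate to satisfy $s_{\Kb_t} \geq 1$, so $\eta_t = 1$, a full unit step that is feasible by Lemma \ref{L: Stability certificate}. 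Local quadratic convergence of Riemannian Newton then gives $\text{dist}(\Kb_{t+1}, \Kb^*_t) \leq \tfrac{L_H L_g^2}{2\mu_g^3} d_t^2$, and the third component of $\vartheta$ is exactly the choice making $\tfrac{L_H L_g^2}{\mu_g^3}\vartheta \leq \alpha$. The triangle inequality and the second hypothesis of the theorem, $\text{dist}(\Kb^*_{t+1},\Kb^*_t) \leq (1-\alpha)\vartheta$, then yield
\begin{align*}
d_{t+1} &\leq \tfrac{L_H L_g^2}{2\mu_g^3} d_t^2 + \text{dist}(\Kb^*_{t+1},\Kb^*_t)\\
&\leq \alpha\, d_t + (1-\alpha)\vartheta \leq \vartheta,
\end{align*}
closing the induction and producing the linear recursion $d_{t+1} \leq \alpha\, d_t + p_t$ with $p_t := \text{dist}(\Kb^*_{t+1},\Kb^*_t)$. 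Unrolling and summing the resulting geometric series gives $\sum_t d_t = O\big(d_1 + \sum_t p_t\big)$.

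For $E_2$, nondegeneracy of $\Kb^*_t$ makes $\text{grad}\,h_t$ vanish there, so the Hessian upper bound gives $f_t(\Kb_t) - f_t(\Kb^*_t) \leq \tfrac{L_g}{2} d_t^2 \leq \tfrac{L_g \vartheta}{2} d_t$, and summation yields $E_2 = O(\sum_t p_t)$. For $E_1$ and $E_3$ I would verify sequential strong stability (Definition \ref{D: Sequential Strong Stability}) of the ONM and the benchmark sequences; the first component of $\vartheta$ and the first branch of the second hypothesis, both of the form $\sigma^4/(L_s\nu)$, are precisely what guarantee the consecutive-map condition $\norm{\Hb_{t+1}^{-1}\Hb_t} \leq 1 + \gamma/2$. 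Sequential strong stability then lets me bound the deviation $\norm{\Sigmab_t - \Xb_t}$ of the true state covariance from the steady-state covariance by the per-step controller movement; combined with Assumption \ref{A: Bounded trace of a stable controller} and $\text{Tr}(\Qb_t),\text{Tr}(\Rb_t) \leq C$ this gives $E_1 = O(\sum_t \norm{\Kb_t - \Kb_{t-1}})$ and $E_3 = O(\sum_t p_t)$. Finally $\norm{\Kb_t - \Kb_{t-1}} = O(d_{t-1} + p_{t-1})$ is summable to $O(\sum_t p_t)$ by the previous paragraph, so all three terms collapse to the claimed bound.

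The main obstacle is orchestrating the two opposing forces in the recursion for $d_t$: the Newton step contracts toward the current optimum while the drift $\Kb^*_t \to \Kb^*_{t+1}$ expands, and the radius $\vartheta$ must simultaneously be small enough to (i) keep every unit step admissible and inside $\mathcal{U}_t$, (ii) let the quadratic contraction dominate the drift, and (iii) certify sequential strong stability for the transient bounds, which is exactly why $\vartheta$ is defined as a minimum of three separate quantities. A secondary difficulty is that the convergence and Hessian estimates live in the Riemannian metric $g$, whereas the covariance and stability analysis and the controller increments $\norm{\Kb_t - \Kb_{t-1}}$ are Euclidean, so the proof requires norm-comparison inequalities between $g$ and the ambient norm over the compact set $\cup_t \mathcal{U}_t$.
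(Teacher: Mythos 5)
Your overall architecture coincides with the paper's: the three-term decomposition through the steady-state costs (the paper's Terms I--III), the Hessian upper bound $f_t(\Kb_t)-f_t(\Kb^*_t)\le \tfrac{L_g}{2}\,\text{dist}(\Kb_t,\Kb^*_t)^2$ for the middle term, sequential strong stability for the two transient terms, and the inductive invariant $\text{dist}(\Kb_t,\Kb^*_t)\le\vartheta$ (the paper's Lemma \ref{L: Bounded starting point}). The genuine gap is in your contraction step: you claim that the second component of $\vartheta$ forces $s_{\Kb_t}\ge 1$, hence $\eta_t=1$, and you then invoke pure quadratic contraction $\text{dist}(\Kb_{t+1},\Kb^*_t)\le \tfrac{L_HL_g^2}{2\mu_g^3}\,\text{dist}(\Kb_t,\Kb^*_t)^2$. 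This is not justified and is false in general. The available certificate lower bound is $s_{\Kb_t}\ge c/\norm{\Gb_t}_{g_{\Kb_t}}\ge c\mu_g/\bigl(L_g\,\text{dist}(\Kb_t,\Kb^*_t)\bigr)$ (Equation \eqref{Local convergence Eq14}), so $s_{\Kb_t}\ge 1$ would require $\text{dist}(\Kb_t,\Kb^*_t)\le c\mu_g/L_g$; but the second component of $\vartheta$ equals $c\mu_g/(L_g-\alpha\mu_g/2)$, which is \emph{strictly larger} than $c\mu_g/L_g$. Hence there is a nonempty range $c\mu_g/L_g<\text{dist}(\Kb_t,\Kb^*_t)\le\vartheta$ where your induction hypothesis holds yet the algorithm may take a damped step $\eta_t=s_{\Kb_t}<1$, and your quadratic-contraction estimate (whose derivation needs the $(1-\eta_t)$ first-order term to vanish) collapses.

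The paper's Lemma \ref{L: Local convergence} is exactly the repair: it establishes only the \emph{linear} contraction $\text{dist}(\Kb_{t+1},\Kb^*_t)\le\alpha\,\text{dist}(\Kb_t,\Kb^*_t)$ by splitting into the cases $s_{\Kb_t}\ge 1$ and $s_{\Kb_t}<1$; in the damped case the certificate bound controls the residual first-order term via $(1-s_{\Kb_t})L_g/\mu_g\le\alpha/2$ --- which is precisely what the otherwise odd-looking denominator $(L_g/\mu_g)-\alpha/2$ in $\vartheta$ is designed for --- while the third component of $\vartheta$ keeps the second-order term below $\alpha/2$. Once you replace your full-step quadratic argument with this two-case linear contraction, the rest of your proposal (the recursion $d_{t+1}\le\alpha d_t+p_t$ and its geometric unrolling, bounding $E_2$ via $d_t^2\le\vartheta d_t$, the sequential-strong-stability bounds on $E_1,E_3$ with controller increments bounded by $(1+\alpha)d_t$, and the role of the $\sigma^4/(L_s\nu)$ terms in certifying the covariance-step condition) goes through and is essentially identical to the paper's proof.
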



Compared to previous work on online control \citep{cohen2018online,agarwal2019online,agarwal2019logarithmic,simchowitz2020improper,chang2021distributed}, where the (static) regret bounds are sublinear in terms of $T$, in our work, since the benchmark policy $\{\Kb^*_t\}$ is {\it time-varying}, we end up with a dynamic regret bound in terms of the path-length of this optimal controller sequence. With this perspective, we conclude that if the cumulative fluctuations satisfies $\sum_{t=2}^{T}\text{dist}(\Kb^*_{t}, \Kb^*_{t-1})=o(T)$, the cost grows sublinearly over time.  

\begin{remark}
    Note that the assumption on the initialization distance $\vartheta$ in Theorem \ref{T: Main theorem} is easily achievable by running backtracking line-search algorithms, as they have global convergence guarantees. 
\end{remark}
    We note that line 6 of the ONM algorithm, which uses stability certificate, can be replaced with backtracking line-search techniques while achieving similar regret guarantees, but this type of approach may introduce undesirable computational burden, especially in the online setup. Since ONM chooses the step-size based on the stability certificate, it provides more flexibility in coping with dynamic environments and allows a wider range of variation for $\text{dist}(\Kb^*_{t+1}, \Kb^*_{t})$, which is an advantage.
    


\section{Numerical Experiments}
In this section, we present our numerical experiments to demonstrate the effectiveness of ONM in practice.

\begin{figure}[t!] 
    \includegraphics[width=.95\columnwidth]{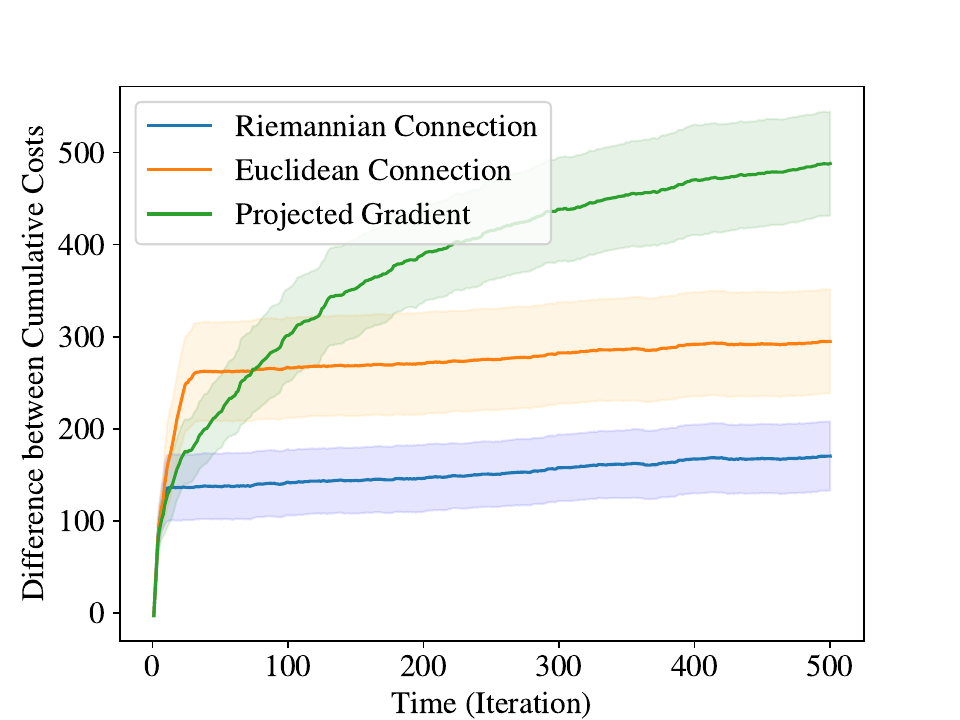}
    \caption{Regret for the constrained case.}
    \label{fig:Cumulative cost difference (constrained_case)}
\end{figure}

\begin{figure}[t!] 
    \includegraphics[width=.95\columnwidth]{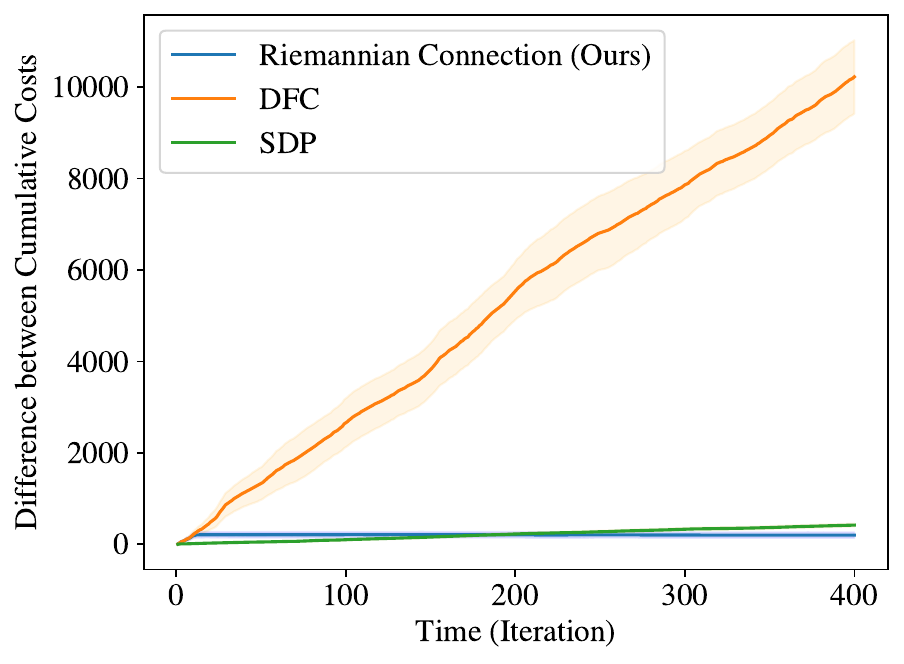}
    \caption{Regret for the unconstrained case.}
    \label{fig:Cumulative cost difference (unconstrained_case)}
\end{figure}

\begin{figure}[t!] 
    \includegraphics[width=.95\columnwidth]{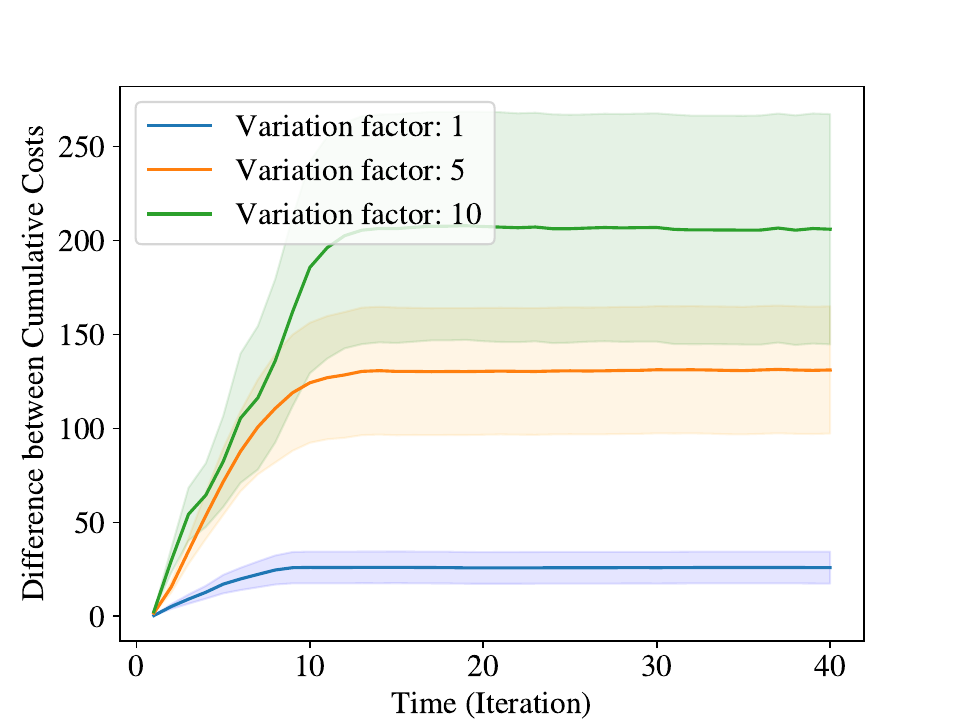}
    \caption{Regret for different variation factors.}
    \label{fig:Cumulative cost difference (function variations)}
\end{figure}

\noindent
We consider a dynamical system with state dimension $n=6$ and input size $m=3$. The elements of system matrices $(\Ab,\Bb)$ are sampled from a Normal distribution with zero mean and unit variance, and $\Ab$ is scaled to ensure that the system is open-loop stable. For the constraint $\mathcal{K}$, we consider the sparsity requirement that half of the elements of the controller matrix (randomly selected) are forced to be zero. For the function sequence $\{(\Qb_t,\Rb_t)\}$, to ensure that the corresponding (local) minimizer sequence varies slowly, $\Qb_t$ (and similarly $\Rb_t$) is constructed using the following formula
\begin{equation}\label{eq:cost}
    \Qb_t = \Ib + \text{variation factor}*\Nb_t^{\top}\Nb_t,
\end{equation}
where $\Nb_t$ is a diagonal noise matrix with elements generated from the uniform distribution on $(0,1)$, and the variation factor is a user-defined constant. We consider three scenarios: 

I) In the first experiment, we compare three different online approaches: 1) ONM; 2) a second-order method where the Hessian operator is defined based on the Euclidean connection; 3) the projected gradient method (PG). As there is no closed-form solution for the constrained setup, we compute the minimizer sequence $\{\Kb^*_t\}$ numerically by running the method in \citep{talebi2023policy} until the gradient norm is smaller than a given threshold. Also, the step-size for these three applied approaches is chosen to satisfy the stability certificate. Given the predefined $(\Ab,\Bb)$, $\{(\Qb_t,\Rb_t)\}$ and the sparsity requirement, we repeat 30 Monte-Carlo simulations and compute the expected regret, which is the difference between the cumulative cost of the corresponding algorithm and that of the algorithm using $\{\Kb^*_t\}$. From Fig. \ref{fig:Cumulative cost difference (constrained_case)} we can see that the regret for PG is worse than the second-order methods since the update is solely based on first-order information. Also, the superior performance of ONM over the Euclidean connection is expected as the Riemannian connection is compatible with the metric arising from the inherent geometry.

II) We also consider the unconstrained setup (without constraint $\mathcal{K}$), where we compare ONM with two other methods, namely the disturbance feedback policy (DFC) \citep{agarwal2019online} and the SDP relaxation approach \citep{cohen2018online}. The step-size for these two methods is chosen based on their corresponding theorems. For the unconstrained setup, the comparator sequence $\{\Kb^*_t\}$ is the exact optimal controller sequence derived by solving the Riccati equation. Again, we run 30 Monte-Carlo simulations to cover different realizations of the system noise. Although we cannot directly compare these three methods since each of them applies a different parameterization, we can see that ONM is capable of quickly adapting to the dynamic environment (Fig. \ref{fig:Cumulative cost difference (unconstrained_case)}). In the experiments, we also observe that since controller parameterizations of both ONM and DFC require a pre-given stable linear controller, the performance also depends on the stability of this given stable controller, which explains the larger regret of ONM compared to that of SDP in the early stage. 
 
 III) Lastly, we evaluate the performance of ONM under different levels of function variations by adjusting the variation factor in \eqref{eq:cost} (Fig. \ref{fig:Cumulative cost difference (function variations)}). We can see that as the variation of the function increases, the regret becomes worse, since the minimizer sequence has more fluctuations, which is in alignment with our theory.

\section{Conclusion}
In this work, we studied the linearly constrained online LQG problem and proposed the ONM algorithm, which is an online second-order method based on the problem-related Riemannian metric. To quantify the performance of ONM, we presented a dynamic regret bound in terms of the path-length of the minimizer sequence of a time-varying infinite-horizon LQG. We also provided simulation results showing the superiority of ONM compared to Newton method with Euclidean metric and projected gradient descent, as well as SDP relaxation and DFC for online control. For future directions, it is interesting to explore the decentralized extension of the problem to accommodate online control in multi-agent systems. Also, another possible direction is to investigate the {\it unknown} dynamics setup and study a Riemannian metric that is built on system estimates.


\appendix{\bf Appendix}

The Appendix consists of three sections. We present some of the important constants in our analysis (Section \ref{sec:constants}), the proof of our main theorem (Section \ref{Sec: A}), and the auxiliary lemmas useful for the proof (Section \ref{Sec: B}).

\section{Constant Terms}\label{sec:constants}
\begin{enumerate}
    \item For any $\Kb_1, \Kb_2 \in \cup \{\mathcal{U}_t\}$ and their corresponding steady-state covariance matrices $\Xb^s_1$ and $\Xb^s_2$,
    \begin{equation*}
        \norm{\Xb_1^s - \Xb_2^s} \leq L_s \text{dist}(\Kb_1, \Kb_2),
    \end{equation*}
    where $\text{dist}(\cdot,\cdot)$ is the Riemannian distance based on the metric $g$.
    \item $L_g$ and $\mu_g$ are defined in Assumption \ref{A: Hessian boundedness}.
    \item Given any $\Kb \in \cup \{\mathcal{U}_t\}$ and any $\Gb \in T_\Kb\Tilde{\Sc}$, define a curve $r: [0,s_{\Kb}]\to \Tilde{\Sc}$ such that $r(\tau) = \Kb + \tau \Gb$. Then, $\forall t$ we have 
     \begingroup
\scriptsize
    \begin{equation*}
        \begin{split}
        \norm{(\text{Hess }{h_t}_{r(\tau)}-\Pc^r_{0,\tau}\text{Hess }{h_t}_{r(0)})[\Gb]}_{g_{r(\tau)}}\leq &L_H\tau\norm{\Gb_{t}}^2_{g_{\Kb}},
    \end{split}   
    \end{equation*}
    \endgroup
    where $\Pc^r_{0,\tau}$ denotes the parallel transport operator from $0$ to $\tau$ along the curve $r$. 
    \item Constant $c$ is used to provide a lower bound for the stability certificate and it first appears in \eqref{Local convergence Eq14}.
\end{enumerate}

\section{Proof of Theorem \ref{T: Main theorem}}\label{Sec: A}
Let us start by introducing some notation. Consider 
\begin{equation*}
\begin{split}
\xb_{t+1} &= (\Ab+\Bb\Kb_t)\xb_t + \wb_t ~~~~~~~~\Xb_t:=\mathbb{E}\left[\xb_t\xb_t^{\top}\right]\\
\xb^*_{t+1} &= (\Ab+\Bb\Kb^*_t)\xb^*_t + \wb_t ~~~~~~~\Xb^*_t:=\mathbb{E}\left[\xb_t^*\xb_t^{*\top}\right]. 
\end{split}
\end{equation*}
Also, let $\xb^s_t$ ($\xb^{*s}_t$) follow the steady-state distribution when using $\Kb_t$ ($\Kb^*_t$) as a fix controller from the outset, i.e., 
\begin{equation*}
\Xb^s_t:=\mathbb{E}\left[\xb_t^s\xb_t^{s\top}\right] ~~~~~~ \Xb^{*s}_t:=\mathbb{E}\left[\xb_t^{*s}\xb_t^{*s\top}\right].
\end{equation*}
The regret is then decomposed into three terms, 
{\scriptsize
\begin{equation*}
    \begin{split}
        &\mathbb{E}\left[\sum_{t=1}^T (\xb_t^{\top}\Qb_t\xb_t + \ub_t^{\top}\Rb_t\ub_t) - ({\xb}_t^{*\top}\Qb_t\xb^*_t + \ub_t^{*\top}\Rb_t\ub^*_t)\right]\\
        =&\sum_{t=1}^T\left[\text{Tr}\big((\Qb_t+\Kb^{\top}_t\Rb_t\Kb_t)\Xb_t\big)-\text{Tr}\big((\Qb_t+\Kb^{\top}_t\Rb_t\Kb_t)\Xb^s_t\big)\right]\\
        +&\sum_{t=1}^T\left[\text{Tr}\big((\Qb_t+\Kb^{\top}_t\Rb_t\Kb_t)\Xb^s_t\big)-\text{Tr}\big((\Qb_t+\Kb^{*\top}_t\Rb_t\Kb^*_t)\Xb^{*s}_t\big)\right]\\
        + &\sum_{t=1}^T\left[\text{Tr}\big((\Qb_t+\Kb^{*\top}_t\Rb_t\Kb^*_t)\Xb^{*s}_t\big)-\text{Tr}\big((\Qb_t+\Kb^{*\top}_t\Rb_t\Kb^*_t)\Xb^*_t\big)\right],
    \end{split}
\end{equation*}
}
which we denote as Term I, Term II, and Term III, respectively. We will provide the upper bound for each one in the sequel.

{\bf Term II:} Based on Lemma \ref{L: Local convergence}, we have 
\begin{equation}\label{Regret bound Eq1}
\begin{split}
    &\text{dist}(\Kb_{t+1}, \Kb^*_{t})^2 \leq \alpha^2\text{dist}(\Kb_{t}, \Kb^*_{t})^2\\
    \leq &\alpha^2\left[2\text{dist}(\Kb_{t}, \Kb^*_{t-1})^2 + 2\text{dist}(\Kb^*_{t-1}, \Kb^*_{t})^2\right],
\end{split}
\end{equation}
where $\text{dist}(\cdot,\cdot)$ denotes the Riemannian distance based on an appropriate metric. Summing Equation \eqref{Regret bound Eq1} over $t$, we get
\begin{equation}\label{Regret bound Eq2}
\begin{split}
    &\sum_{t=2}^{T-1} \text{dist}(\Kb_{t+1}, \Kb^*_{t})^2 \\
    \leq &2\alpha^2\sum_{t=2}^{T-1}\text{dist}(\Kb_{t}, \Kb^*_{t-1})^2 + 2\alpha^2\sum_{t=2}^{T-1}\text{dist}(\Kb^*_{t-1}, \Kb^*_{t})^2.
\end{split}          
\end{equation}
Adding and subtracting $2\alpha^2\text{dist}(\Kb_{T}, \Kb^*_{T-1})^2$ to the right hand side, we get 
\begin{equation}\label{Regret bound Eq3}
\begin{split}
    &\sum_{t=1}^{T-1} \text{dist}(\Kb_{t+1}, \Kb^*_{t})^2 \\
    \leq &\text{dist}(\Kb_{2}, \Kb^*_{1})^2 - 2\alpha^2\text{dist}(\Kb_{T}, \Kb^*_{T-1})^2 \\
    + &2\alpha^2\sum_{t=1}^{T-1}\text{dist}(\Kb_{t+1}, \Kb^*_{t})^2 + 2\alpha^2\sum_{t=2}^{T}\text{dist}(\Kb^*_{t-1}, \Kb^*_{t})^2.
\end{split}    
\end{equation}
By choosing $\alpha$ such that $2\alpha^2 < 1$, Equation \eqref{Regret bound Eq3} can be re-arranged as follows
\begin{equation}\label{Regret bound Eq4}
\begin{split}
    &\sum_{t=1}^{T-1} \text{dist}(\Kb_{t+1}, \Kb^*_{t})^2\\ 
    \leq &\frac{\text{dist}(\Kb_{2}, \Kb^*_{1})^2 - 2\alpha^2\text{dist}(\Kb_{T}, \Kb^*_{T-1})^2}{1-2\alpha^2}\\
    + &\frac{2\alpha^2}{1-2\alpha^2}\sum_{t=2}^{T}\text{dist}(\Kb^*_{t}, \Kb^*_{t-1})^2.
\end{split}
\end{equation}
Based on Equation \eqref{Eq: Cost reformulation2}, we can see that 
\begin{align*}   h_t(\Kb_t)=&\text{Tr}\big((\Qb_t+\Kb^{\top}_t\Rb_t\Kb_t)\Xb^s_t\big)\\h_t(\Kb^*_t)=&\text{Tr}\big((\Qb_t+\Kb^{*\top}_t\Rb_t\Kb^*_t)\Xb^{*s}_t\big). 
\end{align*}
Then, Term II can be expressed as $\sum_{t=1}^T h_t({\Kb}_{t}) - h_t(\Kb^*_{t})$. Since $\Kb^*_t$ is a local minimizer of $h_t$, based on Assumption \ref{A: Hessian boundedness} (upper bound for Riemannian Hessian), we get
\begin{equation}\label{Regret bound Eq5}
    h_t({\Kb}_{t}) - h_t(\Kb^*_{t}) \leq \frac{L_g}{2}\text{dist}({\Kb}_t,\Kb^*_t)^2.
\end{equation}
Summing above over $t$ and applying the triangle inequality, we have
\begin{equation}\label{Regret bound Eq6}
\begin{split}
    &\sum_{t=1}^T h_t({\Kb}_{t}) - h_t(\Kb^*_{t})\\
    \leq &\frac{L_g}{2}\sum_{t=2}^T \left[2\text{dist}({\Kb}_t,\Kb^*_{t-1})^2 + 2\text{dist}(\Kb^*_{t-1}, \Kb^*_t)^2\right]\\
    + &\frac{L_g}{2}\text{dist}({\Kb}_1,\Kb^*_{1})^2\\
    \leq &\frac{L_g}{2}\text{dist}({\Kb}_1,\Kb^*_{1})^2 + L_g\sum_{t=2}^T  \text{dist}(\Kb^*_{t-1}, \Kb^*_t)^2\\
    + &L_g\left[\frac{\text{dist}(\Kb_{2}, \Kb^*_{1})^2 - 2\alpha^2\text{dist}(\Kb_{T}, \Kb^*_{T-1})^2}{1-2\alpha^2}\right]\\
    + &\frac{2L_g\alpha^2}{1-2\alpha^2}\sum_{t=2}^{T}\text{dist}(\Kb^*_{t}, \Kb^*_{t-1})^2,
\end{split}
\end{equation}
where the last inequality is based on Equation \eqref{Regret bound Eq4}.

{\bf Term I and Term III:} Suppose that the controller sequence $\{\Kb_t\}$ satisfies the condition $\Kb_t\in \mathcal{U}_t$ for all $t$, and let $\kappa = \sqrt{\nu}/\sigma$. Assume that $\norm{\Xb^s_{t+1} - \Xb^s_{t}}\leq \zeta$ for all $t$ for some $\zeta\leq \sigma^2/\kappa^2$. Then, by following the derivations of Lemmas 4.3 and 4.4 in \cite{cohen2018online}, it can be shown that the controller sequence $\{\Kb_t\}$ is $(\kappa, \frac{1}{2\kappa^2})$-sequentially strongly stable. We can then use Lemma \ref{L: Sequential strong stability (modified)} to get
\begin{equation}\label{Regret bound (revised) Eq3}
\begin{split}
    \norm{\Xb_t - \Xb^s_t}
    \leq &\exp^{\frac{-(t-1)}{2\kappa^2}}\kappa^2\norm{\Xb_1 - \Xb^s_1}\\
    + &\kappa^2\sum_{i=0}^{t-2}(1-\frac{1}{4\kappa^2})^{2i}\norm{\Xb^s_{t-i} - \Xb^s_{t-(i+1)}}.
\end{split}  
\end{equation}
Since $\Kb_t\in \mathcal{U}_t$ implies that for all $t$, the Riemannian distance between $\Kb_t$ and $\Kb^*_t$ is bounded, we can assume there exists a positive constant $L_s$ such that $\norm{\Xb^s_{t+1} - \Xb^s_{t}}\leq L_s\text{dist}(\Kb_{t+1}, \Kb_{t})$.
Therefore, to ensure \eqref{Regret bound (revised) Eq3}, it is just sufficient to have $\max_{t} \left[\text{dist}(\Kb_{t+1}, \Kb_{t})\right] \leq \sigma^2/L_s\kappa^2$, which is valid due to Lemmas \ref{L: Local convergence} and \ref{L: Bounded starting point} and the initialization condition. Based on Equation \eqref{Regret bound (revised) Eq3} and the facts that $\text{Tr}(\Qb_t), \text{Tr}(
\Rb_t)\leq C$ and $\norm{\Kb_t}\leq \kappa$, we have that
{\small
\begin{equation}\label{Regret bound (revised) Eq4}
\begin{split}
     \text{Term I}\leq &\sum_{t=1}^T \text{Tr}(\Qb_t+\Kb^{\top}_t\Rb_t\Kb_t)\norm{\Xb_t - \Xb^s_t}\\
    \leq &C(1+\kappa^2)\sum_{t=1}^T \kappa^2 \exp^{\frac{-(t-1)}{2\kappa^2}}\norm{\Xb_1 - \Xb^s_1}\\
    + &C(1+\kappa^2)\kappa^2\sum_{t=2}^T\sum_{j=2}^t(1-\frac{1}{4\kappa^2})^{2(t-j)}\norm{\Xb^s_{j} - \Xb^s_{j-1}}\\
    \leq &C(1+\kappa^2)\sum_{t=1}^T \kappa^2 \exp^{\frac{-(t-1)}{2\kappa^2}}\norm{\Xb_1 - \Xb^s_1}\\
    + &C(1+\kappa^2)\kappa^2\sum_{j=2}^T\norm{\Xb^s_{j} - \Xb^s_{j-1}}\sum_{t=j}^T(1-\frac{1}{4\kappa^2})^{2(t-j)}\\
    \leq &C(1+\kappa^2)\sum_{t=1}^T \kappa^2 \exp^{\frac{-(t-1)}{2\kappa^2}}\norm{\Xb_1 - \Xb^s_1}\\
    + &4C(1+\kappa^2)\kappa^4\sum_{j=2}^T\norm{\Xb^s_{j} - \Xb^s_{j-1}}.
\end{split}
\end{equation}
}

Similar to Equation \eqref{Regret bound (revised) Eq3}, if $\forall t,\;\norm{\Xb^{*s}_{t+1} - \Xb^{*s}_{t}} \leq \zeta$ for some $\zeta \leq \sigma^2/\kappa^2$, we also have
\begin{equation}\label{Regret bound (revised) Eq5}
    \begin{split}
    \norm{\Xb^*_t - \Xb^{*s}_t}
    \leq &\exp^{\frac{-(t-1)}{2\kappa^2}}\kappa^2\norm{\Xb^*_1 - \Xb^{*s}_1}\\
    + &\kappa^2\sum_{i=0}^{t-2}(1-\frac{1}{4\kappa^2})^{2i}\norm{\Xb^{*s}_{t-i} - \Xb^{*s}_{t-(i+1)}}.
\end{split}
\end{equation}
By the smoothness of the steady-state covariance matrix, we have $\norm{\Xb^{*s}_{t+1} - \Xb^{*s}_{t}}\leq L_s \text{dist}(\Kb^*_{t+1}, \Kb^*_{t})$, which together with the assumption that  
$$\text{dist}(\Kb^*_{t+1}, \Kb^*_{t})\leq \frac{\sigma^4}{L_s\nu}=\frac{\sigma^2}{L_s\kappa^2},$$
guarantees $\norm{\Xb^{*s}_{t+1} - \Xb^{*s}_{t}}\leq \sigma^2/\kappa^2$. Therefore, Equation \eqref{Regret bound (revised) Eq5} holds, and we have that 
\begin{equation}\label{Regret bound Eq11}
\begin{split}
     \text{Term III}
    \leq &C(1+\kappa^2)\sum_{t=1}^T \kappa^2 \exp^{\frac{-(t-1)}{2\kappa^2}}\norm{\Xb^*_1 - \Xb^{*s}_1}\\
    + &4C(1+\kappa^2)\kappa^4\sum_{j=2}^T\norm{\Xb^{*s}_{j} - \Xb^{*s}_{j-1}}.
\end{split}
\end{equation}
Based on Equations \eqref{Regret bound Eq6}, \eqref{Regret bound (revised) Eq4} and \eqref{Regret bound Eq11}, we conclude that the regret bound is 
$$O\left(\sum_{t=1}^{T-1}\left[\text{dist}(\Kb_t,\Kb_{t+1}) + \text{dist}(\Kb^*_t,\Kb^*_{t+1})\right]\right).$$
Next, we further show that $\sum_{t=1}^{T-1}\text{dist}(\Kb_t,\Kb_{t+1}) = O\left(\sum_{t=1}^{T-1}\text{dist}(\Kb^*_t,\Kb^*_{t+1})\right)$. Applying the triangle inequality and using Lemma \ref{L: Local convergence}, we have
\begin{equation}\label{Regret bound Eq12}
\begin{split}
    \sum_{t=1}^{T-1}\text{dist}(\Kb_t,\Kb_{t+1})
    \leq &\sum_{t=1}^{T-1}\text{dist}(\Kb_t,\Kb^*_t) + \text{dist}(\Kb^*_t,\Kb_{t+1})\\
    \leq &\sum_{t=1}^{T-1}(1+\alpha)\text{dist}(\Kb_t,\Kb^*_t).
\end{split}
\end{equation}
Again, based on Lemma \ref{L: Local convergence}, we have $\forall t$,
\begin{equation}\label{Corollary revised Eq1}
\begin{split}
    \text{dist}(\Kb_{t+1}, \Kb^*_{t+1}) &\leq \text{dist}(\Kb_{t+1}, \Kb^*_{t}) + \text{dist}(\Kb^*_{t}, \Kb^*_{t+1})\\
    &\leq \alpha\text{dist}(\Kb_{t}, \Kb^*_{t}) + \text{dist}(\Kb^*_{t}, \Kb^*_{t+1}).
\end{split}
\end{equation}
Then, by expanding the recursion above, we get
\begin{equation}\label{Corollary revised Eq2}
\begin{split}
    &\text{dist}(\Kb_{t+1}, \Kb^*_{t+1})\\
    \leq &\alpha^t\text{dist}(\Kb_{1}, \Kb^*_{1}) + \sum_{i=0}^{t-1} \alpha^i\text{dist}(\Kb^*_{t-i}, \Kb^*_{(t+1)-i}).
\end{split}
\end{equation}
Summing above over $t$, we obtain
\begin{equation}\label{Corollary revised Eq3}
\begin{split}
    &\sum_{t=0}^{T-1}\left[\alpha^t\text{dist}(\Kb_{1}, \Kb^*_{1}) + \sum_{i=0}^{t-1} \alpha^i\text{dist}(\Kb^*_{t-i}, \Kb^*_{(t+1)-i})\right]\\
    \leq &\frac{1}{1-\alpha} \text{dist}(\Kb_{1}, \Kb^*_{1}) + \sum_{t=1}^{T-1}\sum_{j=1}^{t} \alpha^{t-j}\text{dist}(\Kb^*_{j}, \Kb^*_{j+1})\\
    = &\frac{1}{1-\alpha} \text{dist}(\Kb_{1}, \Kb^*_{1}) + \sum_{j=1}^{T-1}\text{dist}(\Kb^*_{j}, \Kb^*_{j+1})\sum_{t=j}^{T-1} \alpha^{t-j}\\
    \leq &\frac{1}{1-\alpha} \text{dist}(\Kb_{1}, \Kb^*_{1}) + \frac{1}{1-\alpha}\sum_{j=1}^{T-1}\text{dist}(\Kb^*_{j}, \Kb^*_{j+1}),
\end{split}    
\end{equation}
Applying Equation \eqref{Corollary revised Eq3} to Equation \eqref{Regret bound Eq12}, we conclude that the regret bound is
$$O\left(\sum_{t=1}^{T-1}\text{dist}(\Kb^*_t,\Kb^*_{t+1})\right).$$

\section{Supplementary Lemmas}\label{Sec: B}    
\begin{lemma}\label{L: Sequential strong stability (modified)}
    Consider a sequence of linear controllers $\{\Kb_t\}$ that is $(\kappa, \gamma)$-sequentially strongly stable with respect to an LTI system $(\Ab,\Bb)$. Denote by $\Xb^s_t$ the steady-state covariance matrix corresponding to $\Kb_t$ and by $\Xb_t$ the state covariance matrix at iteration $t$ when the policy $\ub_t = \Kb_t\xb_t$ is applied. Then, 
    \begin{equation*}
        \begin{split}
            \norm{\Xb_{t} - \Xb^s_{t}}
        \leq &\exp^{-(t-1)\gamma}\kappa^2\norm{\Xb_1 - \Xb^s_1}\\
        + &\kappa^2\sum_{i=0}^{t-2}(1-\frac{\gamma}{2})^{2i}\norm{\Xb^s_{t-i} - \Xb^s_{t-1-i}}.
        \end{split}
    \end{equation*}
\end{lemma}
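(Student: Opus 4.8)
The claim is that when we apply a sequence of controllers $\{\Kb_t\}$ that is $(\kappa,\gamma)$-sequentially strongly stable, the actual state covariance $\Xb_t$ tracks the steady-state covariance $\Xb_t^s$ of the *current* controller, with an error controlled by (i) an exponentially-decaying transient from the initial mismatch $\|\Xb_1-\Xb_1^s\|$ and (ii) an accumulated sum of consecutive steady-state covariance jumps $\|\Xb_{t-i}^s - \Xb_{t-1-i}^s\|$, each weighted by a geometric factor $(1-\gamma/2)^{2i}$.

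**Recalling the covariance recursion.** The state covariance evolves as $\Xb_{t+1} = (\Ab+\Bb\Kb_t)\Xb_t(\Ab+\Bb\Kb_t)^\top + \Wb$, while the steady state of the $t$-th controller satisfies the fixed-point $\Xb_t^s = (\Ab+\Bb\Kb_t)\Xb_t^s(\Ab+\Bb\Kb_t)^\top + \Wb$. Let me write $M_t := \Ab+\Bb\Kb_t = \Hb_t\Lb_t\Hb_t^{-1}$.

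**Proof sketch.**

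First, I would subtract the two recursions. Set $E_t := \Xb_t - \Xb_t^s$. From the recursion for $\Xb_{t+1}$ and the fixed-point for $\Xb_t^s$:
$$\Xb_{t+1} - \Xb_t^s = M_t(\Xb_t - \Xb_t^s)M_t^\top = M_t E_t M_t^\top.$$
Then I would insert the mismatch between consecutive steady states:
$$E_{t+1} = \Xb_{t+1} - \Xb_{t+1}^s = M_t E_t M_t^\top + (\Xb_t^s - \Xb_{t+1}^s).$$

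**Unrolling.** Iterating this linear recursion backward to the initial index gives
$$E_t = \Big(\prod_{j=1}^{t-1} M_{t-j}\Big) E_1 \Big(\prod_{j=1}^{t-1} M_{t-j}\Big)^{\!\top} + \sum_{i=0}^{t-2}\Big(\prod_{j=1}^{i} M_{t-j}\Big)(\Xb_{t-1-i}^s - \Xb_{t-i}^s)\Big(\prod_{j=1}^{i} M_{t-j}\Big)^{\!\top},$$
where an empty product ($i=0$) is the identity. Taking spectral norms and using submultiplicativity reduces everything to bounding norms of products of the $M$'s.

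**Bounding products of the closed-loop matrices — the crux.** Using $M_{t-j} = \Hb_{t-j}\Lb_{t-j}\Hb_{t-j}^{-1}$, I'd telescope the product of $i$ consecutive matrices:
$$\prod_{j=1}^{i} M_{t-j} = \Hb_{t-1}\Big(\prod_{j=1}^{i}\Lb_{t-j}\,\Hb_{t-j}^{-1}\Hb_{t-j-1}\Big)\Hb_{t-i-1}^{-1}\quad(\text{reindexed}),$$
so that each internal factor is $\Lb_{t-j}\cdot(\Hb_{t-j}^{-1}\Hb_{t-j-1})$. By property (1) of sequential strong stability, $\|\Lb_{t-j}\|\le 1-\gamma$, and by property (3), $\|\Hb_{t-j}^{-1}\Hb_{t-j-1}\|\le 1+\gamma/2$. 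Hence each internal factor has norm at most $(1-\gamma)(1+\gamma/2)\le(1-\gamma/2)$ (using $\gamma\le 1$, so $(1-\gamma)(1+\gamma/2)=1-\gamma/2-\gamma^2/2\le 1-\gamma/2$). With the outer $\|\Hb\|\,\|\Hb^{-1}\|\le\kappa$ from property (2), a product of $i$ factors obeys $\|\prod_{j=1}^i M_{t-j}\|\le\kappa(1-\gamma/2)^i$. Since each product appears conjugated (i.e. squared in norm), this yields the weight $\kappa^2(1-\gamma/2)^{2i}$ on the $i$-th jump term. This telescoping — arranging the $\Hb$'s so that only the adjacent ratios $\Hb^{-1}\Hb'$ and the $\Lb$'s appear, leaving a single leading $\Hb$ and trailing $\Hb^{-1}$ — is the main obstacle and the step that genuinely uses all three properties of sequential strong stability.

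**Finishing.** For the transient term ($i=t-1$), the product of all $t-1$ matrices is bounded by $\kappa(1-\gamma/2)^{t-1}$, so its conjugated norm is at most $\kappa^2(1-\gamma/2)^{2(t-1)}\le\kappa^2 e^{-(t-1)\gamma}$, using $(1-\gamma/2)^2 = 1-\gamma+\gamma^2/4\le 1-\gamma+\gamma/4\le e^{-\gamma}$ (invoking $\gamma\le 1$ and $1-x\le e^{-x}$). Collecting the transient bound $e^{-(t-1)\gamma}\kappa^2\|\Xb_1-\Xb_1^s\|$ and the summation $\kappa^2\sum_{i=0}^{t-2}(1-\gamma/2)^{2i}\|\Xb_{t-i}^s-\Xb_{t-1-i}^s\|$ (using symmetry of the norm to match the stated order of indices) gives exactly the claimed inequality.
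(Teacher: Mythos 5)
Your proof is correct and follows essentially the same route as the paper: both start from the difference of the covariance recursion and the steady-state fixed-point equation, giving $\Xb_{t+1}-\Xb^s_{t+1} = M_t(\Xb_t-\Xb^s_t)M_t^\top + (\Xb^s_t-\Xb^s_{t+1})$ with $M_t=\Hb_t\Lb_t\Hb_t^{-1}$, and both extract the contraction factor $(1-\gamma)(1+\gamma/2)\leq 1-\gamma/2$ from the adjacent-ratio property $\norm{\Hb_{t+1}^{-1}\Hb_t}\leq 1+\gamma/2$; the paper merely organizes your telescoping as a one-step scalar recursion on the conjugated error $\norm{\Hb_t^{-1}(\Xb_t-\Xb^s_t)(\Hb_t^{-1})^\top}$ and converts back to $\norm{\Xb_t-\Xb^s_t}$ at the end via $\norm{\Hb}\norm{\Hb^{-1}}\leq\kappa$, rather than unrolling matrix products explicitly. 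One small slip to fix: your chain $(1-\gamma/2)^2\leq 1-3\gamma/4\leq e^{-\gamma}$ is invalid, since $1-3\gamma/4\leq e^{-\gamma}$ fails for small and moderate $\gamma$ (e.g.\ $\gamma=0.5$); instead apply $1-x\leq e^{-x}$ with $x=\gamma/2$ to get $(1-\gamma/2)^{2(t-1)}\leq e^{-(t-1)\gamma}$ directly, which is all you need.
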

\begin{proof}
    Based on the definition, we have the following equations
    \begin{equation*}
    \begin{split}
        \Xb_{t+1} &= (\Ab + \Bb\Kb_t)\Xb_t(\Ab + \Bb\Kb_t)^{\top} + \Wb,\\
        \Xb^s_{t} &= (\Ab + \Bb\Kb_t)\Xb^s_t(\Ab + \Bb\Kb_t)^{\top} + \Wb.
    \end{split}
    \end{equation*}
    Calculating the difference between the above equations and using the fact that $(\Ab+\Bb\Kb_t)=\Hb_t\Lb_t\Hb_t^{-1}$, we have
    \begin{equation}\label{Sequential strong stability (modified) Eq1}
    \begin{split}
        &\Xb_{t+1} - \Xb^s_{t}
        = \Hb_t\Lb_t\Hb_t^{-1}(\Xb_{t} - \Xb^s_{t})(\Hb_t^{-1})^{\top}\Lb_t^{\top}\Hb_t^{\top}.
    \end{split}    
    \end{equation}
    Subtracting $\Xb^s_{t+1}$ from both sides of the above, we get
    \begin{equation}\label{Sequential strong stability (modified) Eq2}
     \begin{split}
        &\Hb_{t+1}^{-1}(\Xb_{t+1} - \Xb^s_{t+1})(\Hb_{t+1}^{-1})^{\top}\\
        = &\Hb_{t+1}^{-1}\Hb_t\Lb_t\Hb_t^{-1}(\Xb_{t} - \Xb^s_{t})(\Hb_t^{-1})^{\top}\Lb_t^{\top}\Hb_t^{\top}(\Hb_{t+1}^{-1})^{\top}\\
        + &\Hb_{t+1}^{-1}(\Xb^s_{t} - \Xb^s_{t+1})(\Hb_{t+1}^{-1})^{\top}.
    \end{split}    
    \end{equation}
    Based on Equation \eqref{Sequential strong stability (modified) Eq2} and following the definition of sequential strong stability, we derive
    \begin{equation}\label{Sequential strong stability (modified) Eq3}
    \begin{split}
        &\norm{\Hb_{t+1}^{-1}(\Xb_{t+1} - \Xb^s_{t+1})(\Hb_{t+1}^{-1})^{\top}}\\
        \leq &\norm{\Hb_{t+1}^{-1}\Hb_t\Lb_t\Hb_t^{-1}(\Xb_{t} - \Xb^s_{t})(\Hb_t^{-1})^{\top}\Lb_t^{\top}\Hb_t^{\top}(\Hb_{t+1}^{-1})^{\top}}\\
        + &\norm{\Hb_{t+1}^{-1}}^2\norm{\Xb^s_{t} - \Xb^s_{t+1}}\\
        \leq & (1-\gamma)^2(1+\frac{\gamma}{2})^2\norm{\Hb_t^{-1}(\Xb_{t} - \Xb^s_{t})(\Hb_t^{-1})^{\top}}\\
        +&\norm{\Hb_{t+1}^{-1}}^2\norm{\Xb^s_{t} - \Xb^s_{t+1}}\\
        \leq &(1-\frac{\gamma}{2})^2\norm{\Hb_t^{-1}(\Xb_{t} - \Xb^s_{t})(\Hb_t^{-1})^{\top}}\\
        +&\norm{\Hb_{t+1}^{-1}}^2\norm{\Xb^s_{t} - \Xb^s_{t+1}}.
    \end{split}
    \end{equation}
    Unfolding Equation \eqref{Sequential strong stability (modified) Eq3}, we have
    \begin{equation}\label{Sequential strong stability (modified) Eq4}
    \begin{split}
        &\norm{\Hb_{t+1}^{-1}(\Xb_{t+1} - \Xb^s_{t+1})(\Hb_{t+1}^{-1})^{\top}}\\
        \leq &(1-\frac{\gamma}{2})^{2t}\norm{\Hb_1^{-1}(\Xb_1 - \Xb^s_1)(\Hb_1^{-1})^{\top}}\\
        + &\sum_{i=0}^{t-1}\norm{\Hb_{t+1-i}^{-1}}^2(1-\frac{\gamma}{2})^{2i}\norm{\Xb^s_{t+1-i} - \Xb^s_{t-i}}\\
        \leq &\exp^{-\gamma t}\norm{\Hb_1^{-1}(\Xb_1 - \Xb^s_1)(\Hb_1^{-1})^{\top}}\\
        + &\sum_{i=0}^{t-1}\norm{\Hb_{t+1-i}^{-1}}^2(1-\frac{\gamma}{2})^{2i}\norm{\Xb^s_{t+1-i} - \Xb^s_{t-i}}.
        \end{split}
    \end{equation}
    Again, based on the definition of sequential strong stability $\forall t,\;\norm{\Hb_{t}^{-1}} \leq \frac{1}{\alpha^{\prime}},\; \norm{\Hb_t}\leq \beta^{\prime} \text{ and } \frac{\beta^{\prime}}{\alpha^{\prime}}=\kappa$, we have  
    \begin{equation}\label{Sequential strong stability (modified) Eq5}
    \begin{split}
        \norm{\Xb_{t+1} - \Xb^s_{t+1}}
        \leq &\exp^{-\gamma t}\kappa^2\norm{\Xb_1 - \Xb^s_1}\\
        + &\kappa^2\sum_{i=0}^{t-1}(1-\frac{\gamma}{2})^{2i}\norm{\Xb^s_{t+1-i} - \Xb^s_{t-i}}.
    \end{split}  
    \end{equation}
\end{proof}

\begin{lemma}\label{L: Local convergence}
    Suppose that Assumptions \ref{A: Locally optimal stationary} to \ref{A: Bounded trace of a stable controller} hold. Then, if there exists an $\alpha\in(0,1/\sqrt{2})$ for which $\forall t,\;\text{dist}(\Kb_{t},\Kb^*_t) \leq \min \left\{\frac{c}{(L_g/\mu_g)-\alpha/2},\frac{\alpha}{(L_HL_g^2/\mu_g^3)}\right\}$, by selecting $\eta_{t} = \min\{1, s_{\Kb_{t}}\}$, the following inequality holds:
    \begin{equation*}
        \text{dist}(\Kb_{t+1}, \Kb^*_{t}) \leq \alpha\text{dist}(\Kb_{t}, \Kb^*_{t}),
    \end{equation*}
    for $t=1,\ldots,T$. 
\end{lemma}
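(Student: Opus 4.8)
The plan is to treat this as a single-step local-convergence estimate for a damped Riemannian Newton iteration. Since the hypothesis already supplies the distance bound $\text{dist}(\Kb_t,\Kb^*_t)\le\min\{\cdot,\cdot\}$ at \emph{every} $t$, I only need the one-step contraction for a fixed $t$, with no induction required. Writing $d_t:=\text{dist}(\Kb_t,\Kb^*_t)$, the starting point is to compare the update $\Kb_{t+1}=\Kb_t+\eta_t\Gb_t$ against $\Kb^*_t$ and split the error into a \emph{Newton (quadratic) term} that would appear even for a full step $\eta_t=1$, and a \emph{damping term} $(1-\eta_t)\norm{\Gb_t}_{g_{\Kb_t}}$ that is present only when the stability certificate forces $\eta_t=s_{\Kb_t}<1$. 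Concretely, I would aim for
{\small
\[
\text{dist}(\Kb_{t+1},\Kb^*_t)\le \frac{L_HL_g^2}{2\mu_g^3}\,d_t^2 + (1-\eta_t)\norm{\Gb_t}_{g_{\Kb_t}},
\]
}
and then show each summand is at most $\frac{\alpha}{2}d_t$, using the second and first entries of the minimum, respectively.

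For the Newton term, I would use the fundamental theorem of calculus for $\text{grad }h_t$ along the constant-speed geodesic joining $\Kb_t$ to $\Kb^*_t$, together with $\text{grad }h_t(\Kb^*_t)=0$ (Assumption \ref{A: Locally optimal stationary}), to write $-\text{grad }h_t(\Kb_t)$ as an integral of $\text{Hess }h_t$ applied to the parallel geodesic velocity. Substituting the Newton equation $\text{Hess }h_t|_{\Kb_t}[\Gb_t]=-\text{grad }h_t(\Kb_t)$ and subtracting $\text{Hess }h_t|_{\Kb_t}[\log_{\Kb_t}\Kb^*_t]$ isolates a residual that the curve-based Hessian-Lipschitz estimate (constant item~3 in Section \ref{sec:constants}, stated precisely along $r(\tau)=\Kb_t+\tau\Gb_t$) bounds by $\frac{L_H}{2}d_t^2$; inverting $\text{Hess }h_t|_{\Kb_t}$ via the lower bound $\mu_g$ and converting the tangent-space discrepancy back to Riemannian distance through the spectral bounds $\mu_g\le\text{Hess }h_t\le L_g$ (Assumption \ref{A: Hessian boundedness}) produces the $L_HL_g^2/\mu_g^3$ constant. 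Requiring this to be $\le\frac{\alpha}{2}d_t$ is exactly the condition $d_t\le \alpha/(L_HL_g^2/\mu_g^3)$, the second entry of the minimum.

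For the damping term, when $\eta_t=1$ it vanishes, so I would focus on $\eta_t=s_{\Kb_t}<1$. Here two facts combine: first, the Newton equation and Assumption \ref{A: Hessian boundedness} give $\norm{\Gb_t}_{g_{\Kb_t}}\le(L_g/\mu_g)\,d_t$; second, the compactness of $\cup\{\mathcal{U}_t\}$ (uniformly bounded $\overline\lambda(\mathbb{L}((\Ab+\Bb\Kb)^\top,\mathcal{Q}_{\Kb}))$ and $\norm{\Bb}$, and a uniform positive $\underline\lambda(\mathcal{Q}_{\Kb})$), together with equivalence of the Riemannian and ambient norms on this compact set, yields a uniform lower bound $s_{\Kb_t}\norm{\Gb_t}_{g_{\Kb_t}}\ge c$ directly from the certificate formula in Lemma \ref{L: Stability certificate}; this is precisely how the constant $c$ of Section \ref{sec:constants} is defined. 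Hence $(1-\eta_t)\norm{\Gb_t}_{g_{\Kb_t}}=\norm{\Gb_t}_{g_{\Kb_t}}-s_{\Kb_t}\norm{\Gb_t}_{g_{\Kb_t}}\le(L_g/\mu_g)\,d_t-c$, which is at most $\frac{\alpha}{2}d_t$ exactly when $d_t\le c/\big((L_g/\mu_g)-\alpha/2\big)$, the first entry of the minimum. Adding the two $\frac{\alpha}{2}d_t$ bounds gives the claimed $\alpha$-contraction.

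The step I expect to be the main obstacle is the first one: rigorously passing between the straight-line update $\Kb_t+\eta_t\Gb_t$ (which is what the algorithm computes, since no closed-form exponential map is available) and the intrinsic Riemannian distance appearing in the statement. Because $r(\tau)=\Kb_t+\tau\Gb_t$ is \emph{not} a geodesic, its length does not equal $\text{dist}(\cdot,\cdot)$, and one must control this discrepancy; this is exactly why the Hessian-Lipschitz hypothesis is formulated along $r$ with its own parallel transport $\Pc^r_{0,\tau}$, and it is also the source of the extra $L_g^2/\mu_g^2$ metric-distortion factors. Getting these conversions right — and confirming that feasibility $\Kb_{t+1}\in\Tilde{\Sc}$ is guaranteed by Lemma \ref{L: Stability certificate} through the choice $\eta_t=\min\{1,s_{\Kb_t}\}$ — is where the bulk of the careful estimation lies, while the contraction algebra itself is routine once the two bounds above are in hand.
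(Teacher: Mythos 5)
Your handling of the damping term is essentially the paper's own argument: the bounds $\norm{\Gb_t}_{g_{\Kb_t}}\le (L_g/\mu_g)\,d_t$ (writing $d_t=\text{dist}(\Kb_t,\Kb^*_t)$ as you do), the certificate lower bound $s_{\Kb_t}\norm{\Gb_t}_{g_{\Kb_t}}\ge c$ (this is exactly how $c$ enters, cf.\ \eqref{Local convergence Eq14}), the case split on $s_{\Kb_t}\ge 1$ versus $s_{\Kb_t}<1$, and the final algebra matching the two entries of the minimum all appear in the paper. The genuine gap is in your Newton (quadratic) term. You run the classical tangent-space Newton analysis: integrate $\text{Hess }h_t$ along the geodesic joining $\Kb_t$ to $\Kb^*_t$, subtract the Newton equation, and invert the Hessian to get $\norm{\Gb_t-\log_{\Kb_t}\Kb^*_t}_{g_{\Kb_t}}\le (L_H/2\mu_g)\,d_t^2$. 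But the quantity the lemma requires is $\text{dist}(\Kb_t+\eta_t\Gb_t,\Kb^*_t)$, and nothing in the stated assumptions lets you convert the tangent-space residual into that distance. Two conversions are missing: (i) passing from $\norm{u-v}_{g_{\Kb_t}}$ to $\text{dist}(\exp_{\Kb_t}(u),\exp_{\Kb_t}(v))$ requires curvature/comparison (Rauch-type) bounds, which appear nowhere in the assumptions or the constant list; (ii) the algorithm's update is the \emph{ambient straight line} $\Kb_t+\eta_t\Gb_t$, not $\exp_{\Kb_t}(\eta_t\Gb_t)$, and controlling that discrepancy needs second-order retraction bounds involving the Christoffel symbols of $g$, also unavailable. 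Your proposed fix --- converting the tangent-space discrepancy to distance ``through the spectral bounds $\mu_g\le \text{Hess}\le L_g$'' --- does not work: those bounds relate gradient norms and function gaps to distances; they say nothing about exponential-map or retraction distortion. A further mismatch: the Lipschitz-Hessian constant of item 3 in Section \ref{sec:constants} is stated along straight-line curves $r(\tau)=\Kb+\tau\Gb$, not along the geodesic you integrate over, so even your residual bound invokes the hypothesis on a curve where it is not given.

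The paper's proof is structured precisely to avoid this obstruction: it never compares points in the tangent space. Instead it tracks the \emph{gradient norm} along the actual update curve $r(\eta)=\Kb_t+\eta\Gb_t$, using a parallel-field/fundamental-theorem argument and the Lipschitz-Hessian bound along $r$ (the exact form in which that hypothesis is supplied) to obtain
\begin{equation*}
\norm{\text{grad }{h_t}_{\Kb_{t+1}}}_{g_{\Kb_{t+1}}}\le (1-\eta_t)\norm{\text{grad }{h_t}_{\Kb_t}}_{g_{\Kb_t}}+\frac{\eta_t^2 L_H}{2\mu_g^2}\norm{\text{grad }{h_t}_{\Kb_t}}^2_{g_{\Kb_t}},
\end{equation*}
i.e.\ \eqref{Local convergence Eq8}; only at the end does it convert to distance, via the geodesic from $\Kb_{t+1}$ to the critical point $\Kb^*_t$ and the Hessian lower bound, $\mu_g\,\text{dist}(\Kb_{t+1},\Kb^*_t)\le \norm{\text{grad }{h_t}_{\Kb_{t+1}}}_{g_{\Kb_{t+1}}}$ as in \eqref{Local convergence Eq11}, together with $\norm{\text{grad }{h_t}_{\Kb_t}}_{g_{\Kb_t}}\le L_g d_t$ from \eqref{Local convergence Eq12}. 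That is the true source of the $L_g/\mu_g$ and $L_H L_g^2/\mu_g^3$ factors. If you replace your tangent-space comparison with this gradient-norm bookkeeping, the remainder of your argument --- the damping term, the role of $c$, and the concluding $\alpha$-contraction algebra --- goes through exactly as you wrote it.
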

\begin{proof}
    Based on Assumption \ref{A: Hessian boundedness}, we have that
$$
\mu_g\norm{\Gb_{t}}^2_{g_{\Kb_{t}}} \leq \langle \text{Hess }{h_t}_{\Kb_{t}}[\Gb_{t}], \Gb_{t}\rangle,
$$
so using Cauchy-Schwartz inequality at $\Kb_{t}$, for the Newton direction $\Gb_{t}$, we have that
        \begin{equation}\label{Local convergence Eq1}
    \norm{\Gb_{t}}_{g_{\Kb_{t}}}\leq \frac{1}{\mu_g}\norm{\text{grad }{h_t}_{\Kb_{t}}}_{g_{\Kb_{t}}}.
        \end{equation}
    Define a curve $r: [0,s_{\Kb_{t}}]\to \Tilde{\Sc}$ such that $r(\eta) = \Kb_{t} + \eta \Gb_{t}$ and consider a smooth vector field $E(\eta)$ which is parallel along the curve $r$. Define another scalar function $\phi:[0,s_{\Kb_{t}}]\to \mathrm{R}$ such that $\phi(\eta) = \langle\text{grad }{h_t}_{r(\eta)}, E(\eta) \rangle$. 
    We then have
    \begin{equation}\label{Local convergence Eq2}
    \begin{split}
        &\phi^{\prime}(\eta)
        =\langle\text{Hess }{h_t}_{r(\eta)}[\Gb_{t}], E(\eta)\rangle.
    \end{split}
    \end{equation}
    Since
    \begin{equation}\label{Local convergence Eq3}
        \phi(\eta) = \phi(0) + \eta\phi^{\prime}(0) + \int_{0}^{\eta}\left(\phi^{\prime}(\tau)-\phi^{\prime}(0)\right) d\tau,
    \end{equation}
    by substituting \eqref{Local convergence Eq2} into \eqref{Local convergence Eq3}, we derive
    \begingroup
\small
        \begin{equation}\label{Local convergence Eq4}
        \begin{split}
            \vphantom{\int_0^1}&\phi(\eta_{t})
            = \langle \text{grad }{h_t}_{\Kb_{t+1}}, E(\eta_{t}) \rangle\\ 
            = &(1-\eta_{t})\langle \text{grad }{h_t}_{\Kb_{t}}, E(0) \rangle\\
            + &\int_{0}^{\eta_{t}} \langle \text{Hess }{h_t}_{r(\tau)}[\Gb_{t}], E(\tau) \rangle  - \langle \text{Hess }{h_t}_{r(0)}[\Gb_{t}], E(0)\rangle d\tau\\
            =&(1-\eta_{t})\langle \Pc^r_{0,\eta_{t}}\text{grad }{h_t}_{\Kb_{t}}, E(\eta_{t}) \rangle\\
            + &\int_{0}^{\eta_{t}} \langle (\text{Hess }{h_t}_{r(\tau)}-\Pc^r_{0,\tau}\text{Hess }{h_t}_{r(0)})[\Gb_{t}], E(\tau) \rangle d\tau,
        \end{split}
        \end{equation}
        \endgroup
    where $\Pc^r_{0,\tau}$ denotes the parallel transport operator from $0$ to $\tau$ along the curve $r$, and the last equality is due to the linear isometry property of parallel transport. Noting that $\forall t$, the Hessian operator $\text{Hess }{h_t}_{r(\eta)}$ is smooth in $\eta$, there exists a general constant $L_H$ such that
    \begingroup
\small
    \begin{equation}\label{Local convergence Eq5}
    \begin{split}
        &\norm{(\text{Hess }{h_t}_{r(\tau)}-\Pc^r_{0,\tau}\text{Hess }{h_t}_{r(0)})[\Gb_{t}]}_{g_{r(\tau)}}
        \leq L_H\tau\norm{\Gb_{t}}^2_{g_{\Kb_{t}}}.
    \end{split}   
    \end{equation}
    \endgroup
    Also since the parallel transport operator conserves the inner product, we have
    \begin{equation}\label{Local convergence Eq6}
        \norm{\Pc^r_{0,\eta_{t}}\text{grad }{h_t}_{\Kb_{t}}}_{g_{\Kb_{t+1}}} = \norm{\text{grad }{h_t}_{\Kb_{t}}}_{g_{\Kb_{t}}}.
    \end{equation}
    Then, by choosing the parallel vector field $E(\eta)$ satisfying $E(\eta_{t}) = \text{grad }{h_t}_{\Kb_{t+1}}$, based on Equations \eqref{Local convergence Eq5} and \eqref{Local convergence Eq6} we get

    \begin{equation}\label{Local convergence Eq7}
    \begin{split}
        \vphantom{\int_{0}^t}\phi(\eta_{t}) &= \langle \text{grad }{h_t}_{\Kb_{t+1}}, \text{grad }{h_t}_{\Kb_{t+1}} \rangle\\
        &\leq (1-\eta_{t})\norm{\text{grad }{h_t}_{\Kb_{t}}}_{g_{\Kb_{t}}}\norm{\text{grad }{h_t}_{\Kb_{t+1}}}_{g_{\Kb_{t+1}}}\\
        &+\int_{0}^{\eta_{t}}\left[ L_H\tau\norm{\Gb_{t}}^2_{g_{\Kb_{t}}}\norm{E(\tau)}_{g_{r(\tau)}} \right]d\tau\\
        &= (1-\eta_{t})\norm{\text{grad }{h_t}_{\Kb_{t}}}_{g_{\Kb_{t}}}\norm{\text{grad }{h_t}_{\Kb_{t+1}}}_{g_{\Kb_{t+1}}}\\
        &+\int_{0}^{\eta_{t}}\left[ L_H\tau\norm{\Gb_{t}}^2_{g_{\Kb_{t}}}\norm{\text{grad }{h_t}_{\Kb_{t+1}}}_{g_{\Kb_{t+1}}} \right]d\tau\\
        &=(1-\eta_{t})\norm{\text{grad }{h_t}_{\Kb_{t}}}_{g_{\Kb_{t}}}\norm{\text{grad }{h_t}_{\Kb_{t+1}}}_{g_{\Kb_{t+1}}}\\
        &+ \frac{\eta_{t}^2}{2}L_H\norm{\Gb_{t}}^2_{g_{\Kb_{t}}}\norm{\text{grad }{h_t}_{\Kb_{t+1}}}_{g_{\Kb_{t+1}}},
    \end{split}            
    \end{equation}
    where the first inequality is based on Cauchy–Schwarz inequality, and the following equality is due to the fact that the length of the parallel vector field is constant. From above, we conclude that
    \begin{equation}\label{Local convergence Eq8}
    \begin{split}
        &\norm{\text{grad }{h_t}_{\Kb_{t+1}}}_{g_{\Kb_{t+1}}}\\
        \leq &(1-\eta_{t})\norm{\text{grad }{h_t}_{\Kb_{t}}}_{g_{\Kb_{t}}} + \frac{\eta_{t}^2}{2}L_H\norm{\Gb_{t}}^2_{g_{\Kb_{t}}}\\
        \leq &(1-\eta_{t})\norm{\text{grad }{h_t}_{\Kb_{t}}}_{g_{\Kb_{t}}} + \frac{\eta^2_{t} L_H}{2\mu_g^2}\norm{\text{grad }{h_t}_{\Kb_{t}}}^2_{g_{\Kb_{t}}},
    \end{split}            
    \end{equation}
    where the last inequality is based on \eqref{Local convergence Eq1}. Next, select a tangent vector $F_{t+1}\in T_{\Kb_{t+1}}\Tilde{\Sc}$ such that the curve $\xi(\eta):=\exp_{\Kb_{t+1}}[\eta F_{t+1}]$ is the geodesic between $\xi(0)=\Kb_{t+1}$ and $\xi(1)=\Kb^*_t$, and also $\xi^{\prime}(0) = F_{t+1}$. 
    Then, for a parallel vector field $E(\eta)$ along $\xi$, define a scalar function $\psi:[0,1]\mapsto \mathrm{R}$ such that $\psi(\eta) = \langle 
\text{grad }{h_t}_{\xi(\eta)} ,E(\eta) \rangle$. Similar to \eqref{Local convergence Eq2}, we have that
    \begin{equation}\label{Local convergence Eq9}
        \psi^{\prime}(\eta) = \langle \text{Hess }{h_t}_{\xi(\eta)}[\xi^{\prime}(\eta)], E(\eta)\rangle.
    \end{equation}
    As the velocity of a geodesic curve is parallel, by choosing $E(\eta) = \xi^{\prime}(\eta)$ and based on \eqref{Local convergence Eq9}, we get
        \begin{equation}\label{Local convergence Eq10}
        \begin{split}
            \psi(1)
            = &\psi(0) + \int_{\tau=0}^1 \psi^{\prime}(\tau)d\tau\\
            = &\langle\text{grad }{h_t}_{\Kb_{t+1}}, F_{t+1}\rangle \\ 
            +&\int_{\tau=0}^1 \langle \text{Hess }{h_t}_{\xi(\tau)}[\xi^{\prime}(\tau)], \xi^{\prime}(\tau)\rangle d\tau.
        \end{split}    
        \end{equation}    
    Since $\psi(1) = 0$ (i.e., $\Kb^*_t$ is a local minimum), based on the Hessian boundedness assumption and the fact that $\norm{\xi^{\prime}(\tau)}_{g_{\xi(\tau)}} = \norm{F_{t+1}}_{g_{\Kb_{t+1}}}$ for $\tau\in[0,1]$, we have 
    \begin{equation}\label{Local convergence Eq11}
    \begin{split}
        \mu_g\norm{F_{t+1}}^2_{g_{\Kb_{t+1}}} \leq  &\int_{\tau=0}^1 \langle \text{Hess }{h_t}_{\xi(\tau)}[\xi^{\prime}(\tau)], \xi^{\prime}(\tau)\rangle d\tau\\
        \vphantom{\int_{\tau=0}^1}= &-\langle\text{grad }{h_t}_{\Kb_{t+1}}, F_{t+1}\rangle\\
        \leq &\norm{\text{grad }{h_t}_{\Kb_{t+1}}}_{g_{\Kb_{t+1}}} \vphantom{\int_{\tau=0}^1}\norm{F_{t+1}}_{g_{\Kb_{t+1}}}\\
        \vphantom{\int_{\tau=0}^1} \Rightarrow \mu_g\norm{F_{t+1}}_{g_{\Kb_{t+1}}}\leq &\norm{\text{grad }{h_t}_{\Kb_{t+1}}}_{g_{\Kb_{t+1}}}.
    \end{split}
    \end{equation}
    Note that $\norm{F_{t+1}}_{g_{\Kb_{t+1}}} = \text{dist}(\Kb_{t+1},\Kb^*_t)$, where $\text{dist}(\cdot,\cdot)$ denotes the Riemannian distance function. 
    Next, based on the smoothness of $\text{grad }h_t$ and the boundedness of $\cup \{\mathcal{U}_t\}$, we have 
    \begin{equation}\label{Local convergence Eq12}
        \norm{\text{grad }{h_t}_{\Kb_{t}}}_{g_{\Kb_{t}}} \leq L_g \text{dist}(\Kb_t,\Kb^*_t).
    \end{equation}
    Substituting Equations \eqref{Local convergence Eq11} and \eqref{Local convergence Eq12} into Equation \eqref{Local convergence Eq8}, we derive
    \begin{equation}\label{Local convergence Eq13}
    \begin{split}
        &\text{dist}(\Kb_{t+1},\Kb^*_t) \\
        \leq &(1-\eta_{t})\frac{L_g}{\mu_g}\text{dist}(\Kb_{t},\Kb^*_t) + \frac{\eta^2_{t} L_HL_g^2}{2\mu_g^3}\text{dist}(\Kb_{t},\Kb^*_t)^2.
    \end{split}
    \end{equation}
    Notice that the mapping $\Kb\mapsto \mathbb{L}((\Ab+\Bb\Kb)^\top, \mathcal{Q}_{\Kb})$ is smooth (as the mapping $\mathcal{Q}$ is selected to be smooth),  so based on the continuity of the maximum
    eigenvalue, there exists a positive constant $c$ such that
    \begin{equation}\label{Local convergence Eq14}
        s_{\Kb_{t}} \geq \frac{c}{\norm{\Gb_{t}}_{g_{\Kb_{t}}}} \geq \frac{c\mu_g}{L_g\text{dist}(\Kb_{t},\Kb^*_t)},
    \end{equation}
    where the second inequality is based on Equations \eqref{Local convergence Eq1} and \eqref{Local convergence Eq12}. For an $\alpha\in(0,1/\sqrt{2})$, based on the assumptions of the lemma, we have $\forall t$
        \begin{equation}\label{Local convergence (modified) Eq1}
            \text{dist}(\Kb_{t},\Kb^*_t) \leq \min \left\{\frac{c}{(L_g/\mu_g)-\alpha/2},\frac{\alpha}{(L_HL_g^2/\mu_g^3)}\right\}.
        \end{equation}
    Then, by selecting $\eta_{t}=\min\{1, s_{\Kb_{t}}\}$, we can guarantee
    $$
    \text{dist}(\Kb_{t+1},\Kb^*_t) \leq \alpha \text{dist}(\Kb_t,\Kb^*_t),
    $$
    because if $s_{\Kb_t} \geq 1$, we have $\eta_t=1$ in \eqref{Local convergence Eq13}, and the result is immediate by observing \eqref{Local convergence (modified) Eq1}. Otherwise, if $s_{\Kb_t} < 1$, we have $\eta_t=s_{\Kb_t}$ and
    \begin{equation}\label{Local convergence (modified) Eq2}
    \begin{split}
        &(1-\eta_{t})\frac{L_g}{\mu_g} + \frac{\eta^2_{t}L_HL_g^2}{2\mu_g^3}\text{dist}(\Kb_{t},\Kb^*_t)\\
        \leq & (1-s_{\Kb_{t}})\frac{L_g}{\mu_g} + \frac{L_HL_g^2}{2\mu_g^3}\text{dist}(\Kb_{t},\Kb^*_t)\\
        \leq & \frac{L_g}{\mu_g} - \left(\frac{L_g}{\mu_g}-\frac{\alpha}{2}\right) + \frac{\alpha}{2} = \alpha,        
    \end{split}
    \end{equation}
    where the second inequality is based on Equations \eqref{Local convergence (modified) Eq1} and \eqref{Local convergence Eq14}. Therefore, the proof is complete.

    
\end{proof}

\begin{lemma}\label{L: Bounded starting point}
    Suppose that for some $\vartheta > 0$, $\text{dist}(\Kb_{1}, \Kb^*_{1})\leq \vartheta$, $\text{dist}(\Kb^*_{t}, \Kb^*_{t+1})\leq (1-\alpha)\vartheta,\;\forall t$, and the assumptions of Lemma \ref{L: Local convergence} hold. Then, we have the following inequality
    \begin{equation*}
        \text{dist}(\Kb_{t}, \Kb^*_{t})\leq \vartheta,\;\forall t.
    \end{equation*}
\end{lemma}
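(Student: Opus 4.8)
The plan is to establish the bound $\text{dist}(\Kb_t, \Kb^*_t) \leq \vartheta$ by induction on $t$. The base case $t=1$ is immediate from the hypothesis $\text{dist}(\Kb_1, \Kb^*_1) \leq \vartheta$. For the inductive step, I would assume $\text{dist}(\Kb_t, \Kb^*_t) \leq \vartheta$ and propagate this bound to step $t+1$, using the one-step contraction of Lemma \ref{L: Local convergence} together with the drift bound on the minimizer sequence.

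First, I would observe that by the definition of $\vartheta$ in Theorem \ref{T: Main theorem} we have $\vartheta \leq \min\{\frac{c}{(L_g/\mu_g)-\alpha/2}, \frac{\alpha}{(L_HL_g^2/\mu_g^3)}\}$, since $\vartheta$ is the minimum of three terms, two of which are exactly these. Combined with the inductive hypothesis, this yields $\text{dist}(\Kb_t, \Kb^*_t) \leq \min\{\frac{c}{(L_g/\mu_g)-\alpha/2}, \frac{\alpha}{(L_HL_g^2/\mu_g^3)}\}$, which is precisely the precondition required to invoke Lemma \ref{L: Local convergence} at step $t$. Applying that lemma with the step-size choice $\eta_t = \min\{1, s_{\Kb_t}\}$ gives $\text{dist}(\Kb_{t+1}, \Kb^*_t) \leq \alpha\, \text{dist}(\Kb_t, \Kb^*_t) \leq \alpha\vartheta$.

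I would then close the induction via the triangle inequality for the Riemannian distance and the hypothesis $\text{dist}(\Kb^*_t, \Kb^*_{t+1}) \leq (1-\alpha)\vartheta$:
\[
\text{dist}(\Kb_{t+1}, \Kb^*_{t+1}) \leq \text{dist}(\Kb_{t+1}, \Kb^*_t) + \text{dist}(\Kb^*_t, \Kb^*_{t+1}) \leq \alpha\vartheta + (1-\alpha)\vartheta = \vartheta.
\]
This recovers the bound at step $t+1$, completing the induction.

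The main subtlety, rather than a genuine obstacle, is avoiding a circular dependence on Lemma \ref{L: Local convergence}: read literally, its statement presupposes the distance bound for all $t$, yet that is exactly what is being proved. I would resolve this by noting that its proof (through Equations \eqref{Local convergence Eq13} and \eqref{Local convergence (modified) Eq2}) establishes the contraction $\text{dist}(\Kb_{t+1}, \Kb^*_t) \leq \alpha\, \text{dist}(\Kb_t, \Kb^*_t)$ at each fixed $t$ using only the precondition at that same $t$. Consequently the lemma may be invoked one step at a time, with the inductive hypothesis supplying the precondition at step $t$ and the just-derived conclusion supplying it at step $t+1$, so the induction is well-founded.
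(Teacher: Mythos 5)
Your proof is correct and takes essentially the same route as the paper's: induction on $t$, the one-step contraction $\text{dist}(\Kb_{t+1},\Kb^*_t)\leq \alpha\,\text{dist}(\Kb_t,\Kb^*_t)$ from Lemma \ref{L: Local convergence}, and the triangle inequality giving $\alpha\vartheta + (1-\alpha)\vartheta = \vartheta$. The one difference is that you explicitly resolve the apparent circularity of invoking Lemma \ref{L: Local convergence} (whose statement presupposes the distance bound at every $t$) by noting its proof is a per-step argument and by using $\vartheta \leq \min\{\frac{c}{(L_g/\mu_g)-\alpha/2},\frac{\alpha}{(L_HL_g^2/\mu_g^3)}\}$ to feed the precondition forward; the paper instead sidesteps this by listing the assumptions of Lemma \ref{L: Local convergence} among the hypotheses, so your version is, if anything, the more careful reading.
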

\begin{proof}
    When $t=1$, we have $\text{dist}(\Kb_{1}, \Kb^*_{1}) \leq \vartheta$. Suppose that $\text{dist}(\Kb_{t}, \Kb^*_{t})\leq \vartheta$ holds; then for iteration $(t+1)$, we have
    \begin{equation}\label{Bounded starting point Eq1}
    \begin{split}
        \text{dist}(\Kb_{t+1}, \Kb^*_{t+1}) &\leq \text{dist}(\Kb_{t+1}, \Kb^*_{t}) + \text{dist}(\Kb^*_{t}, \Kb^*_{t+1})\\
        &\leq \alpha\text{dist}(\Kb_{t}, \Kb^*_{t}) + \text{dist}(\Kb^*_{t}, \Kb^*_{t+1})\\
        &\leq \alpha \vartheta + (1-\alpha)\vartheta\\
        &=\vartheta,
    \end{split}
    \end{equation}
    where the second inequality is based on Lemma \ref{L: Local convergence}. The result is proved by induction.
\end{proof}


\bibliographystyle{abbrvnat}        
\bibliography{references}           



\end{document}